\DeclareMathOperator{\V}{Vert}
\DeclareMathOperator{\Cen}{PR}
\DeclareMathOperator{\sign}{sign}
\DeclareMathOperator{\Vol}{Vol}
\newtheorem{thm}{Theorem}
\newtheorem{lemma}{Lemma}
\newtheorem*{main lemma}{The main lemma}
\newtheorem{dfn}{Definition}
\title{Cyclopermutohedron: geometry and topology }
\author{Ilia Nekrasov, Gaiane Panina, Alena Zhukova}
\address{Ilia Nekrasov: St. Petersburg State university, geometr.nekrasov@yandex.ru;\newline Gaiane Panina: SPIIRAN, St. Petersburg State university, gaiane-panina@rambler.ru; \newline Alena Zhukova: St. Petersburg State university,  a.zhukova@spbu.ru}
\keywords{Permutohedron,  virtual polytope, discrete Morse theory, Abel polynomial. \ \   MSC 51M20 }
\begin{document}

\begin{abstract}

The face poset of the \textit{permutohedron} realizes the combinatorics of linearly ordered partitions of the set $[n]=\{1,...,n\}$. Similarly, the \textit{cyclopermutohedron} is a virtual polytope that realizes the combinatorics of cyclically ordered partitions of the set $[n+1]$. The cyclopermutohedron  was introduced by the third author by motivations coming from \textit{configuration spaces of polygonal linkages}.

In the paper we prove two facts: (1)~the volume of the cyclopermutohedron equals zero, and (2) the homology groups $H_k$ for $k=0,...,n-2$ of the face poset of the cyclopermutohedron are non-zero free abelian groups. We also present  a short formula for their ranks.

\end{abstract}

\maketitle

\section{Introduction}\label{SectIntro}

The \textit{standard permutohedron} $\Pi_n$ (see
\cite{z}) is  defined as the convex hull of all points in $\mathbb{R}^n$ that
are obtained by permuting the coordinates of the point
$(1,2,...,n)$. It has the following properties:
\begin{enumerate}

    \item \begin{enumerate}
            \item The $k$-faces of  $\Pi_n$ are labeled by ordered partitions of the set $[n]:=\{1,2,...,n\}$ into $(n-k)$ non-empty parts.

            \item A face $F$ of $\Pi_n$ is contained in a face $F'$ iff the label of $F$ refines the label of $F'$.
		
		   \end{enumerate}
\end{enumerate}

Here and in the sequel, by a \textit{refinement} we mean an \textit{order preserving refinement}. For instance, the label $\left(\{1,3\},\{5,6\},\{4\},\{2\}\right)$ refines the label $(\{1,3\},\{5,6\},\{2,4\}) $ but does not refine $(\{1,3\},\{2,4\},\{5,6\})$.

\begin{enumerate}[resume]
\item $\Pi_n$ is an $(n-1)$-dimensional polytope.

\item $\Pi_n$ is a \textit{zonotope}, that is,  Minkowski sum of line segments $q_{ij}$, whose defining vectors are $\{e_i-e_j\}_{1\leq i < j \leq n}$, where $\{e_i\}_{1\leq i \leq n}$ are the standard orthonormal basis vectors.

\end{enumerate}

By  analogy, we replace the linear order by cyclic order and build up the following  regular\footnote{ To define a regular cell complex, it suffices to list all the closed cells of the complex together with the incidence relations.} cell complex ${CP}_{n+1}$ \cite{pan3}, see Fig.  \ref{FigExLabel}.
		\begin{enumerate}
   		\item Assume that $n>2$. For  $k=0,...,n-2$, the $k$-dimensional cells ($k$-cells, for short) of the complex ${CP}_{n+1}$ are labeled by (all possible) cyclically ordered partitions of the set $[n+1]=\{1,...,n+1\}$ into $(n-k+1)$ non-empty parts.
		\item A (closed) cell $F$ contains a cell $F'$ whenever the label of $F'$ refines the label of $F$. Here we again mean order preserving refinement.
 		\end{enumerate}

The \textit{cyclopermutohedron} $\mathcal{CP}_{n+1}$ is a virtual polytope whose face poset is combinatorially isomorphic to complex ${CP}_{n+1}$. More details will be given in  Section \ref{SSecVir}; for a complete presentation see \cite{pan3}.

\begin{figure}[h]
\centering
\includegraphics[width=8 cm]{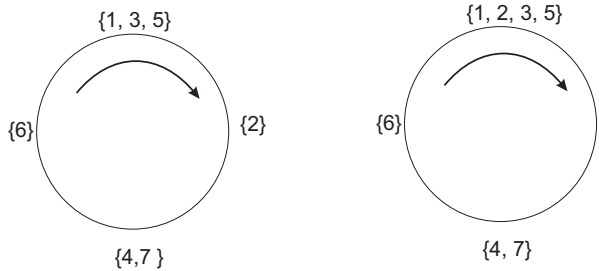}

\caption{These are labels of two cells whose dimensions are $3$ and $4$. The first cell is a face of the second one.
}
\label{FigExLabel}
\end{figure}

In the paper we study geometry and topology of the cyclopermutohedron. Before we formulate the main result some explanation is needed.
The cyclopermutohedron is a \textit{virtual polytope}, that is, the Minkowski difference of two convex polytopes.
 A
detailed discussion on virtual polytopes can be found in the survey
\cite{vir}. One of the messages of the survey is that  virtual polytopes inherit almost all the properties and structures of convex polytopes: the volume (together with its polynomiality property),
normal fan,  face poset, etc.
However, virtual polytopes do not inherit the convexity property and therefore  may appear as counter-intuitive:
(1) The volume of a virtual polytope, although well-defined, can be negative, see \cite{mar, vir}. The volume also can turn to zero, even if the virtual polytope does not degenerate.
(2) The face poset of a virtual polytope is also well-defined. However, it is not necessarily isomorphic to a combinatorial sphere. So one can expect non-zero homologies in all dimensions.

\medskip

 The\textbf{ main results} of the paper are:
\begin{thm}\label{Volume}
The volume of the cyclopermutohedron $\mathcal{CP}_{n+1}$ equals zero.
\end{thm}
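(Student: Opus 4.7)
The approach will be to exploit an explicit Minkowski-type presentation of $\mathcal{CP}_{n+1}$ as a virtual polytope together with a natural orientation-reversing symmetry of the cyclic structure to force the volume to equal its own negative.

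First, I would import from \cite{pan3} the explicit formula expressing $\mathcal{CP}_{n+1}$ as a formal integer combination of convex polytopes indexed by proper non-empty subsets (or cyclic intervals) of $[n+1]$, so that
$$\mathcal{CP}_{n+1}\ =\ \sum_{I} c_I\,\Delta_I$$
is a finite signed Minkowski sum. Because the Minkowski volume extends as a polynomial from the cone of convex polytopes to the group of virtual polytopes (see \cite{vir}), the quantity $\Vol(\mathcal{CP}_{n+1})$ then becomes a finite signed combination of mixed volumes of the summands $\Delta_I$ with explicit integer coefficients, suitable for symmetry-based cancellation arguments.

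Next, I would identify an isometric involution $\sigma$ of the ambient space realizing reversal of the cyclic order on $[n+1]$ (for instance, $\sigma(x_1,\ldots,x_{n+1}) = (x_{n+1},\ldots,x_1)$). This involution permutes the summands $\Delta_I$ by permuting their indexing subsets; but because passing to the reverse cyclic order flips the orientation built into every cyclically ordered partition, I expect it to negate the coefficient pattern, giving the key identity
$$\sigma(\mathcal{CP}_{n+1})\ =\ -\mathcal{CP}_{n+1}$$
in the group of virtual polytopes, up to a translation (which affects neither the identity nor the volume). Taking volumes, the isometry $\sigma$ preserves them, while the virtual-polytope negation $P \mapsto -P$ acts on the $d$-dimensional volume by $(-1)^d$ with $d = n-2$. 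This gives $\Vol(\mathcal{CP}_{n+1}) = (-1)^{n-2}\Vol(\mathcal{CP}_{n+1})$, which immediately yields the theorem when $n$ is odd. For $n$ even I would combine $\sigma$ with an additional orientation-reversing reflection of the ambient space that preserves the virtual polytope, thereby inserting the missing sign and again forcing the volume to vanish.

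The main technical obstacle will be establishing the identity $\sigma(\mathcal{CP}_{n+1}) = -\mathcal{CP}_{n+1}$. This amounts to tracking, summand by summand in the presentation of \cite{pan3}, how the reversal of cyclic order acts on each convex summand $\Delta_I$ and on the coefficient $c_I$; the reason to expect the identity is that order reversal is an outer symmetry of the cyclic group $\mathbb{Z}/(n+1)$ and reverses the natural orientation of every cyclic partition, so the cancellation ought to be built into the construction. To pin this down I would first verify the identity by hand for small cases such as $n=3,4$, where $\mathcal{CP}_{n+1}$ admits an explicit low-dimensional description, use those computations to identify the correct sign convention, and then extract the general argument from the definition in \cite{pan3}.
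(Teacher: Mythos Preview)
Your proposal has a genuine gap, in fact several.

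First, a minor point: the dimension of $\mathcal{CP}_{n+1}$ is $n-1$, not $n-2$. The proper faces of the complex $CP_{n+1}$ go up to dimension $n-2$, but the virtual polytope itself lives in the hyperplane $x_1+\cdots+x_n=\tfrac{n(n+1)}{2}\subset\mathbb{R}^n$ and has dimension $n-1$; this is the volume being computed. So your parity dichotomy is inverted.

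Second, and more seriously, the identity $\sigma(\mathcal{CP}_{n+1})=-\mathcal{CP}_{n+1}$ on which the whole argument rests is very unlikely to hold, and nothing in your plan gives a mechanism for it. The construction in the paper (following \cite{pan3}) is
\[
\mathcal{CP}_{n+1}=\bigoplus_{i<j}q_{ij}+e-\bigoplus_{i=1}^n r_i
\]
in $\mathbb{R}^n$, not $\mathbb{R}^{n+1}$; only the subgroup $S_n\subset S_{n+1}$ fixing $n+1$ acts by coordinate permutations, and under it the cyclopermutohedron is \emph{invariant}, not anti-invariant. Any isometry of the ambient hyperplane must send segments to segments of the same length, but $|e_i-e_j|=\sqrt{2}$ while $|R_k|=\sqrt{n(n-1)}$, so for $n>2$ no isometry can exchange a $q_{ij}$ with an $r_k$. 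Hence if $\sigma$ is an isometry, $\sigma\big(\bigoplus q_{ij}\big)$ is again $\bigoplus q_{ij}$ (up to translation) and likewise for the $r_k$, giving $\sigma(\mathcal{CP}_{n+1})=\mathcal{CP}_{n+1}$, not its Minkowski negative. The fact that cyclic reversal is an automorphism of the \emph{face poset} does not produce such a geometric anti-symmetry: for ordinary polytopes, too, poset automorphisms are realized by isometries preserving $P$, never by isometries sending $P$ to $-P$ in the Grothendieck group.

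Third, the patch you propose for the ``bad'' parity cannot work. The volume of a virtual polytope is the polynomial extension of ordinary volume from the cone of convex polytopes; since ordinary volume is invariant under \emph{all} isometries, orientation-reversing ones included, so is its polynomial extension. An extra reflection therefore contributes no sign.

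For comparison, the paper's proof is computational rather than symmetry-based: it writes the $(n-1)$-volume as a signed sum of determinants via the zonotope brick decomposition (Lemma~\ref{lemmaVol}), interprets the nonvanishing terms as decorated forests (Lemma~\ref{LemmaSmallDet}), and then recognizes the resulting alternating sum as a value of an Abel polynomial, which vanishes by a divisibility argument. The cancellation is genuinely combinatorial---a forest-counting identity---and not a consequence of a single geometric involution.
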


\begin{thm}\label{Betti}
The homology groups of the face poset of cyclopermutohedron $\mathcal{CP}_{n+1}$ are free abelian groups. Their ranks are:
$$rk\,(\,H_{k}( CP_{n+1},\mathbb{Z})\,) =\left\{
                                      \begin{array}{lllll}
                                      \dbinom{n}{k}, & \hbox{ if } 0 \leq k < n-2; \hspace{10mm}\\
 &\\
2^n+\frac{n^2-3n-2}{2}, & \hbox{ if } k = n-2; \\
 &\\

                                       0, & \hbox{otherwise}.
                                       \end{array}
                                       \right.$$
\end{thm}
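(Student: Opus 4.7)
The plan is to apply Forman's discrete Morse theory to the regular CW complex $CP_{n+1}$. The strategy has three stages: constructing an acyclic partial matching on the Hasse diagram of $CP_{n+1}$, enumerating the resulting critical cells, and verifying that the Morse boundary maps vanish, so that the homology is free abelian of rank equal to the critical cell count in each dimension.

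\textbf{Stage 1 (the matching).} I would single out the element $n+1\in[n+1]$ as a marker and define a matching based on the local structure around the block containing $n+1$. For a cyclically ordered partition $\tau=(A_1,\dots,A_r)$ with $n+1\in A_i$, a canonical move is assigned---either a \emph{split} (removing one element from $A_i$ to form a new adjacent block) or a \emph{merge} (combining $A_i$ with a cyclic neighbour)---depending on the sizes and cyclic positions of $A_i$ and its neighbours. The pairs (cell, image) form the matching. To prove acyclicity one equips each cell with a lexicographic weight recording the contents of $A_i$ and its neighbours, and verifies that this weight strictly decreases along every alternating path.

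\textbf{Stage 2 (critical cells).} Critical cells are those for which neither move is legal. In dimensions $0\le k<n-2$ I expect the critical cells to be in bijection with $k$-subsets of $[n]$, yielding rank $\binom{n}{k}$, the subset recording which elements are ``frozen'' in a canonical normal form determined by $n+1$. In the top dimension $k=n-2$ the partitions are too thin to admit either move, and the critical cells split into two families of sizes $2^n$ and $(n^2-3n-2)/2$; the first family should be indexed by subsets of $[n]$ (hence $2^n$), while the second accounts for a small number of low-complexity exceptional configurations. The Abel polynomial $A_n(x)=x(x-n)^{n-1}$ (flagged in the keyword list) is expected to enter as the bookkeeping device for these counts, via its classical interpretation as an enumerator of labelled rooted forests.

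\textbf{Stage 3 (vanishing of Morse differentials).} With critical cell counts $c_k$ in hand, the Euler-characteristic identity
$$
\sum_{k=0}^{n-2}(-1)^k (n-k)!\,S(n+1,\,n-k+1)\;=\;\sum_{k=0}^{n-2}(-1)^k c_k
$$
(where $S$ denotes a Stirling number of the second kind) provides a first consistency check. To upgrade this rank equality to ``the homology is free abelian of rank $c_k$ in each dimension'', I would show that every Morse incidence coefficient between consecutive critical cells vanishes---either by a direct orientation-cancellation argument on gradient paths, or by exploiting the cyclic $\mathbb{Z}/(n+1)$-symmetry of $CP_{n+1}$ to pair opposite-signed contributions off.

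\textbf{The main obstacle} is Stage 2: designing the matching so that the count $\binom{n}{k}$ emerges for $k<n-2$ \emph{and}, simultaneously, the top-dimensional count factors as $2^n+(n^2-3n-2)/2$ with the correct split. The appearance of $2^n$ strongly suggests a bijection between top-dimensional critical cells and subsets of $[n]$; making this bijection precise while cleanly accounting for the small quadratic correction is where the delicate combinatorics, and likely the Abel-polynomial identity, must do the real work.
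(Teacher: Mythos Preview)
Your overall architecture---discrete Morse theory on $CP_{n+1}$, enumerate critical cells, show the Morse differential vanishes---matches the paper exactly. But the concrete content you supply at each stage is either wrong or missing, so as written this is not yet a proof.

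\textbf{The matching.} Your proposed matching operates only on the block $A_i$ containing $n+1$ (split an element off $A_i$, or merge $A_i$ with a neighbour). That is not enough structure to force the critical cells down to $\binom{n}{k}$ in dimensions $k<n-2$: once the $(n+1)$-block is a singleton and its neighbours are fixed, every arrangement of the remaining blocks would be critical. The paper's matching is global: it scans the whole cyclic word for the \emph{smallest} entry $k\in[n]$ that is ``movable'' (either $k$ is the minimum of a non-singleton block not containing $n+1$ and can be split off to the left, or $\{k\}$ is a singleton followed by a block $I$ with $k<I$ and $n+1\notin I$, into which it can be merged). Acyclicity then follows from the observation that along a gradient path no two entries swap relative order more than once. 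The resulting critical cells are of two explicit types: Type~1, a decreasing run of singletons followed by the $(n+1)$-set; and Type~2, $(\{i\}\ I\ \{n+1,\dots\})$ with $i<I$, which exist only in dimension $n-2$.

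\textbf{The counts.} The $\binom{n}{k}$ for $k<n-2$ are just the Type~1 cells (choose which $k$ elements of $[n]$ sit with $n+1$). In dimension $n-2$ the natural split is $\binom{n}{2}$ Type~1 cells plus $2^n-n-1$ Type~2 cells, which sums to $2^n+\tfrac{n^2-3n-2}{2}$; your proposed decomposition into blocks of size $2^n$ and $\tfrac{n^2-3n-2}{2}$ does not correspond to anything structural. The Abel polynomial plays \emph{no role} here---it is used only in the volume computation (Theorem~1), not in the homology computation---so looking for it in the critical-cell count will lead you astray.

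\textbf{The differential.} The paper shows by direct case analysis that any two critical cells are joined by either zero or exactly two gradient paths, and that the two paths always carry opposite signs (the sign discrepancy occurs at the first step of each path; all later steps preserve orientation). The cyclic $\mathbb{Z}/(n+1)$-action you propose is broken by the matching and does not organise the cancellation.
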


\bigskip

 Let us understand the meaning of the theorems for the toy  example $n=3$, that is, for $\mathcal{CP}_{4}$. The complex ${CP}_{4}$ (and therefore, the face poset of the cyclopermutohedron) is the graph with six vertices and twelve edges, see  Fig. \ref{FigCycloper4}, left.
Its Betti numbers are $1$ and $7$.

The cyclopermutohedron $\mathcal{CP}_{4}$ (computed in \cite{pan3}) can be represented by a closed polygon, whereas its area (that is, two-dimensional volume) equals the integral of the winding number against the Lebesgue measure (see \cite{vir}). In other words, in this case the volume equals ''sum of areas of six small triangles
minus the area of the hexagon'' in Fig. \ref{FigCycloper4} (middle),   which is zero.

\begin{figure}[h]
\centering
\includegraphics[width=12 cm]{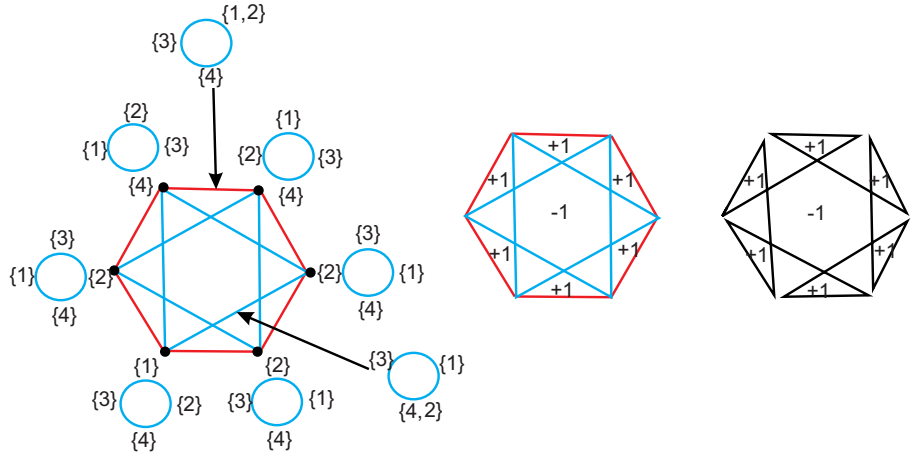}
\caption{Left: face poset of $\mathcal{CP}_{4}$. We indicate the labels of all the vertices and the labels of two edges. Middle: the cyclopermutohedron $\mathcal{CP}_{4}$ represented by a closed polygon, the limit of the polygon depicted on the right. }\label{FigCycloper4}
\end{figure}

\medskip

We use the following \textbf{methods}:
(1) Proof  of Theorem 1 is based on the polynomiality of the volume combined with the theory of Abel polynomials. The proof resembles the volume computation of  the standard permutohedron, which reduces to
count of spanning trees.  
(2) Theorem 2 is proven via discrete Morse theory  by R. Forman \cite{for}. In a sense, it is a simplification of the approach of \cite{zhu} where a perfect discrete Morse function on the moduli space of a polygonal linkage was constructed.

\medskip

\textbf{Acknowledgements.} The present research  is  supported by RFBR, research project No. 15-01-02021. The first author is also supported by   JSC ''Gazprom Neft''. The third author is also supported by «Young Russian Mathematics» foundation.

\newpage
\section{Theoretical backgrounds and toolboxes}\label{SecTheorBackgr}
\subsection{Virtual polytopes}\label{SSecVir}\cite{vir}
Virtual polytopes appeared in the literature as useful geometrization of Minkowski differences of convex polytopes; below we give just a brief sketch.

 \textit{A convex polytope }is the convex hull of a finite, non-empty point set in the Euclidean space $\mathbb{R}^n$.
Minkowski addition turnes the set of all convex polytopes $\mathcal{P}^+$ into a commutative semigroup whose unit element is the  single-point polytope $E= \{ 0 \}$.

\begin{dfn}
The group $\mathcal{P}$ of {\em virtual polytopes} is the Grothendieck group associated to the semigroup $\mathcal{P}^+$.
The elements of $\mathcal{P}$ are called {\em virtual polytopes}.
\end{dfn}
More instructively,  $\mathcal{P}$ can be explained as follows.

\begin{enumerate}
  \item A virtual polytope is a formal difference $K- L$ of convex polytopes.
  \item Two such expressions $K_1 - L_1$ and $K_2 - L_2$ are identified, whenever $K_1 + L_2 = K_2 + L_1$ as convex polytopes.
  \item  The group operation is defined by
  $$(K_1 - L_1) + (K_2 - L_2) := (K_1 + K_2)- (L_1 + L_2).$$
\end{enumerate}

\newpage
\subsection{Cyclopermutohedron}\cite{pan3}

Assuming that $\{e_i\}_{i=1}^n$ are standard basis vectors in $\mathbb{R}^n$, define the points
$$\begin{array}{ccccccccc}
R_i=\sum_{i=1}^n (e_j-e_i)=(-1, & ... & -1, & n-1, & -1, & ... & -1, & -1, &-1, )\in \mathbb{R}^{n},\\
     &  & & \ i &  &  &  &  &
\end{array}
$$
and  the following two families of line segments:
$q_{ij}=\left[e_i,e_j\right], \ \ \  i<j$ and $  r_i=\left[0,R_{i} \right].$
We also need the point $e=\left(1,1,...,1\right)\in \mathbb{R}^{n}$.
The \textit{cyclopermutohedron} is a virtual polytope defined as the Minkowski sum:
$$\mathcal{CP}_{n+1}:= \bigoplus_{i< j} q_{ij} + e - \bigoplus_{i=1}^n  r_i.$$

Throughout the paper the sign ''$ \bigoplus$'' denotes the Minkowski sum, whereas  the sign ''$\sum$'' is reserved for the sum of numbers.

\medskip

The cyclopermutohedron  lies in the hyperplane
$x_1+...+x_n=\frac{n(n+1)}{2},$
so its  dimension is $(n-1)$. Therefore, by its volume we mean the $(n-1)$- volume.

The face poset of $\mathcal{CP}_{n+1}$ is isomorphic to the complex $CP_{n+1}$ (defined in the Introduction).

\subsection{Abel polynomials and rooted forests}\cite{Sagan}\label{SecAbel}

A \textit{rooted forest} is a graph equals to a disjoint union of trees, where each of the trees has one marked vertex.

The \textit{Abel polynomials} form a sequence of polynomials, where the $n$-th term is defined by
$$A_{n,a}(x)=x(x-an)^{n-1}.$$

A special case of the Abel polynomials with $a=-1$ counts rooted labeled forests. Namely, if  $A_{n}(x) := A_{n,-1}(x) = x(x+n)^{n-1}$ is the $n$-th {Abel polynomial}, then
$$A_{n}(x)=\sum_{k = 0}^{n} t_{n,k}\cdot x^{k}  ,$$
where $t_{n,k}$ is the number of forests on $n$ labeled vertices consisting of $k$ rooted trees.

\subsection{Discrete Morse theory on a regular cell complex}\cite{for}\label{SSecDMF}

Assume we have a regular cell complex $X$. By $\alpha^p, \ \beta^p$ we denote its $p$-dimensional cells, or \textit{$p$-cells}, for short.

\textit{A discrete vector field} on $X$ is a set of pairs
$\big(\alpha^p,\beta^{p+1}\big)$
such that:
\begin{enumerate}
    \item  each cell of the complex participates in at most one pair,
    \item  in each pair, the cell $\alpha^p$ is a facet of the cell $\beta^{p+1}$.
\end{enumerate}

Given a discrete vector field, a \textit{gradient path} is a sequence of cells
$$ \beta_0^{p+1},\ \alpha_1^p,\ \beta_1^{p+1}, \ \alpha_2^p,\ \beta_2^{p+1} ,\dots,\ \alpha_m^p,\ \beta_m^{p+1},\ \alpha_{m+1}^p,$$
which satisfies the conditions:
\begin{enumerate}
    \item each $\big(\alpha_i^p,\ \beta_i^{p+1}\big)$ is a pair;
    \item  $\alpha_i^p$ is a facet of  $\beta_{i-1}^{p+1}$;
    \item $\alpha_i \neq \alpha_{i+1}$ for any $i$.
\end{enumerate}

A path is \textit{closed } if $\alpha_{m+1}^p$ is paired with $\beta_0^{p+1}$.
\textit{A discrete Morse function } is a discrete vector field without closed paths.

Assuming that a discrete Morse function is fixed, the \textit{critical cells} are those cells of the complex that are not paired. Morse inequality says that we cannot avoid them completely. However, our goal is to minimize their number.

Discrete Morse function theory allows to compute homology groups. Fix an orientation for each of the cells of complex and introduce the free abelian groups  $\mathcal{M}_k(X,\mathbb{Z})$ whose generators bijectively correspond to critical cells of index $k$.
These groups are incorporated in the chain complex called\textit{ Morse complex }associated with $X$
$$\cdots \rightarrow \mathcal{M}_k(X,\mathbb{Z}) \rightarrow \mathcal{M}_{k-1}(X,\mathbb{Z})\rightarrow \cdots ,$$
where the  boundary operators ${\partial_k}$ are defined by
$$\partial_k(\beta^k) = \sum_{\alpha^{k-1}} [\beta:\alpha] \cdot \alpha^{k-1},$$
where $\alpha$ ranges over all $(k-1)$-dimensional cells, and $[\beta:\alpha]$ is the number of gradient paths from  $\beta^k$ to $\alpha^{k-1}$. Each gradient path is counted with  a sign $\pm1$, depending on whether the orientation of $\beta^k$ induces the chosen orientation on $\alpha^{k-1}$, or the opposite orientation. With this boundary operators the above complex computes the homology groups of $X$:
$$ H_k(X,\mathbb{Z})=Ker(\partial_k)/Im(\partial_{k-1}).$$

\section{Volume of cyclopermutohedron equals zero}\label{SecVolIsZero}

As we have already mentioned, the notion of volume extends nicely from convex polytopes to virtual polytopes. We explain here the meaning of the \textit{volume of a virtual zonotope}.

Assume we have a convex zonotope $Z\subset \mathbb{R}^n$, that is, the Minkowski sum of some linear segments $\{s_i\}_{i=1}^m$ :
$$Z=\bigoplus_{i=1}^m \  s_i.$$

For each subset $I \subset [m]$ such that $|I| = n$, denote by $Z_I$ the \textit{elementary parallelepiped}, or the \textit{brick} spanned by $n$ segments $\{s_i\}_{i\in I}$, provided that the defining vectors of the segments are linearly independent. In other words, the brick equals the Minkowski sum
$$Z_I=\bigoplus_Is_i.$$

It is known that $Z$ can be partitioned into the union of all such $Z_I$, which implies immediately
$$\Vol(Z) = \sum_{I\subset [m],\,|I|=n} \Vol(Z_I) = \sum_{I\subset [m],\,|I|=n} |Det(S_I)|,$$
where $S_I$ is the matrix composed of defining vectors of the segments from $I$.
Now take positive $\lambda_1,...,\lambda_m$ and sum up the dilated segments $\lambda_is_i$. Clearly, we have

$$\Vol\Big(\bigoplus_{i=1}^m\ \lambda_i s_i\Big)=\sum_{I\subset [m], |I|=n}\prod_{i\in I}\lambda_i \cdot|Det(S_I)|.$$

For fixed $s_i$, we get a polynomial in $\lambda_i$, which counts not only the volume of convex zonotope (which originates from positive $\lambda_i$), but also the volume of a virtual zonotope, which originates from any real $\lambda_i$, including negative ones, see \cite{vir}.
So, one can use the above formula as the definition of the volume of a virtual zonotope.

\begin{lemma}\label{lemmaVol}Let $E=E_n$ be the set of edges of the complete graph $K_n$. The $(n-1)$-volume of the cyclopermutohedron can be computed by the formula:
$$\Vol(\mathcal{CP}_{n+1})=  Vol\Big( \bigoplus_{i< j} q_{ij} - \bigoplus_{i=1}^n
 r_i\Big)=$$
$$=\frac{1}{\sqrt{n}}\sum_{|I|+|M|=n-1} (-1)^{|M|}|Det(q_{ij},r_k,e)|_{(ij)\in I, \ k \in M}.$$
Here $I$ ranges over subsets of $E$, whereas $M$ ranges over subsets of $[n]$. The matrix under determinant is composed of defining vectors of the segments $q_{ij}$ and $r_k$, and also of the vector $e=(1,1,...,1,1)$.
\end{lemma}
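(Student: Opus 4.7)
My plan is to reduce the computation of the $(n-1)$-dimensional volume of the cyclopermutohedron to an $n$-dimensional volume of a virtual zonotope via the standard prism-extension trick, and then to apply the virtual-zonotope volume formula recalled just before the lemma. As a preliminary observation, the defining vectors $e_j-e_i$ of $q_{ij}$ and $R_k$ of $r_k$ all have coordinate sum zero, so they lie in the linear hyperplane $H_0=\{x_1+\cdots+x_n=0\}$. Consequently the cyclopermutohedron lies in an affine translate of $H_0$, and since translation by $e$ does not affect the volume, it suffices to compute $\Vol_{n-1}(Z)$, where $Z := \bigoplus_{i<j} q_{ij} - \bigoplus_{i=1}^n r_i$.

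Next I would promote $Z$ to a full-dimensional virtual zonotope by taking the Minkowski sum $Z \oplus [0,e]$. Since $e$ is orthogonal to $H_0$ with $|e|=\sqrt{n}$, the resulting $n$-dimensional prism satisfies
$$\Vol_n\bigl(Z \oplus [0,e]\bigr) = \sqrt{n}\cdot \Vol_{n-1}(Z),$$
which holds for convex polytopes by the usual prism formula and extends to virtual polytopes by the multilinearity of volume in its Minkowski summands. Now $Z \oplus [0,e]$ is a genuine $n$-dimensional virtual zonotope whose generating segments are the $q_{ij}$ (with coefficient $+1$), the $r_k$ (with coefficient $-1$), and $[0,e]$ (with coefficient $+1$).

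Applying the virtual-zonotope volume formula to $Z\oplus[0,e]$ then expresses $\Vol_n$ as a signed sum of absolute determinants over $n$-subsets of the generating segments, weighted by the product of the corresponding coefficients. The crucial observation is that any such $n$-subset that omits $[0,e]$ consists entirely of segments whose defining vectors lie in $H_0$; hence the corresponding $n\times n$ determinant vanishes. Thus only subsets containing $[0,e]$ survive, and these correspond precisely to pairs $(I,M)$ with $I\subset E$, $M\subset[n]$, and $|I|+|M|=n-1$, with overall sign $(-1)^{|M|}$ contributed by the $r_k$. Dividing through by $\sqrt{n}$ recovers exactly the claimed identity. There is no substantive obstacle in this plan; the only point requiring care is tracking the normalization factor $\tfrac{1}{\sqrt{n}}$ coming from the ambient-versus-intrinsic volume comparison, and verifying that the multilinear extension of the zonotope volume formula legitimately accommodates the $-1$ coefficients on the $r_k$, which is precisely the content of the polynomial definition of virtual-zonotope volume recalled above.
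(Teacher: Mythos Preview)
Your proposal is correct and follows essentially the same approach as the paper: reduce the $(n-1)$-volume in the hyperplane to an $n$-volume by adjoining the segment $[0,e]$ and dividing by $|e|=\sqrt n$, then apply the virtual-zonotope volume formula. You have in fact spelled out more than the paper does, explicitly noting why the bricks omitting $e$ vanish (all remaining generators lie in $H_0$) and why the sign $(-1)^{|M|}$ arises from the $-1$ coefficients on the $r_k$; the paper's proof compresses all of this into a single sentence.
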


\begin{proof} The dimension of the cyclopermutohedron  equals $n - 1$. That is, we deal with $(n-1)$-volume, which reduces to the $n$-volume by adding $e=(1,1,...,1,1,)$ and dividing by $|e|=\sqrt{n}$.
\end{proof}

Now we are ready to prove Theorem 1. Keeping in mind Lemma \ref{lemmaVol}, let's first fix $I$ and $M$ with $|I|+|M|=n-1$, and compute one single summand $|Det(q_{ij},  r_{k},  e)|_{(ij)\in I, \ k \in M}$.

If $M=\emptyset$, the determinant equals $1$ iff the set $I$ gives a tree; otherwise it is zero.

Assume now that $M$ is not empty.
$$|Det(q_{ij},  r_{k},  e)| =
\begin{vmatrix} 0 & 0 & \dotsm & -1 & \dotsm & 1\\
                \vdots & \vdots & \ddots & -1 & \dotsm & 1\\
                -1  & 0 & \dotsm & -1 & \dotsm & 1\\
                \vdots & -1 & \dotsm & -1 & \dotsm & 1\\
                1   & 0 & \dotsm & -1 & \dotsm & 1\\
                \vdots & \vdots & \ddots & -1 & \dotsm & 1\\
                0 & 0 & \dotsm & n-1 & \dotsm & 1\\
                \vdots & \vdots & \ddots & -1 & \dotsm & 1\\
                0   & 1 &\dotsm & -1 & \dotsm & 1\\
                \vdots & \vdots & \ddots & -1 & \dotsm & 1\\
                0 & 0 & \dotsm & -1 & \dotsm & 1\\
\end{vmatrix} = $$

Adding  $e$ to all the columns $r_i$, we get:
$$ = n^{|M|} \cdot \begin{vmatrix} 0 & 0 & \dotsm & 0 & \dotsm & 1\\
                \vdots & \vdots & \ddots & 0 & \dotsm & 1\\
                -1  & 0 & \dotsm & 0 & \dotsm & 1\\
                \vdots & -1 & \dotsm & 0 & \dotsm & 1\\
                1   & 0 & \dotsm & 0 & \dotsm & 1\\
                \vdots & \vdots & \ddots & 0 & \dotsm & 1\\
                0 & 0 & \dotsm & 1 & \dotsm & 1\\
                \vdots & \vdots & \ddots & 0 & \dotsm & 1\\
                0   & 1 &\dotsm & 0 & \dotsm & 1\\
                \vdots & \vdots & \ddots & 0 & \dotsm & 1\\
                0 & 0 & \dotsm & 0 & \dotsm & 1\\
\end{vmatrix} =n^{|M|}\cdot(*).$$

We wish to proceed in a similar way, that is, add the columns containing the unique non-zero entry $1$ to other columns chosen in an appropriate way. To explain this reduction let us give two technical definition.

\begin{dfn}\label{DefDecForest}

A \textit{decorated forest} $F=(G,M)$ is a graph $G=([n],I)$ without cycles on $n$ labeled vertices together with a set of marked vertices $M\subset[n]$ such that the following conditions hold:
\begin{enumerate}
    \item number of marked vertices $|M|$  $+$ number of edges  $|I|$ equals $n-1$;
    \item each connected component of $G$ has at most one marked vertex.
\end{enumerate}

\end{dfn}

Immediate observations are:
(1) Each decorated forest has exactly one connected component with no vertices marked. We call it\textit{ a free tree}. Denote by $N(F)$ the number of vertices of the free tree.
(2) Each decorated forest is a disjoint union of the free tree and some rooted forest. The number of rooted trees equals $|M|$.
(3) Each decorated forest $F$ yields a collection of $\{e_{ij}, r_k\}_{(ij)\in I, \ k\in M}$, whose above determinant $(*)$ we denote by $|Det(F)|$ for short.
For instance, for the first decorated forest in Figure \ref{FigKill}, we have  $N(F)=2,\ |M|= 1.$

Now we define the \textit{reduction of a decorated forest} (see Figure \ref{FigKill} for example). It goes as follows.
Assume we have a decorated forest. Take a marked vertex $i$ and an incident edge $(ij)$. Remove the edge and mark the vertex $j$. Repeat until is possible.
Roughly speaking, a marked vertex $i$ {kills} the edge $(ij)$ and {generates} a new marked vertex $j$.

\begin{figure}[h]
\centering
\includegraphics[width=12 cm]{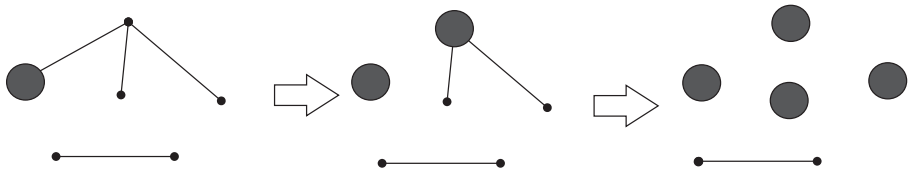}
\caption{Reduction process for a forest with $N(F)~=~2,\ |M|(F)= 1$.
Grey balls denote the marked vertices.}\label{FigKill}
\end{figure}

An obvious observation is:
\begin{lemma}
\begin{enumerate}
    \item The free tree does not change during the reduction.
    \item The reduction brings us to a decorated forest with a unique free tree.  All other trees are one-vertex trees, and all these vertices are marked.
    \item The reduction can be shortened: take the connected components one by one and do the following: if a connected component has a marked vertex, eliminate all its edges and mark all its vertices.
    Otherwise leave the connected component as it is.
    \item The reduction does not depend on the order of the marked vertices we deal with.\qed
\end{enumerate}
\end{lemma}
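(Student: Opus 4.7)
My plan is to establish one invariant from which all four claims fall out: throughout the reduction, the set of marked vertices remains contained inside those initial connected components that already had a marked vertex, and every edge ever removed lies inside such a component. I would prove this by induction on the number of reduction steps, observing that when we select a marked vertex $i$ and delete an incident edge $(ij)$, the newly marked vertex $j$ belongs to the same initial component as $i$. Since by definition the free tree carries no marked vertex at the start, this invariant immediately gives (1): no edge of the free tree is ever incident to a marked vertex, hence none is ever deleted, and the free tree is preserved throughout the process.

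For (2), I would first note that the reduction terminates because each step strictly decreases the number of edges, and that at termination no marked vertex is incident to any edge. It then remains to show that within every non-free component $C$, reduction necessarily consumes all edges and marks all vertices. Since $C$ is a tree containing exactly one initially marked vertex, I would argue by induction on $|C|$: the ``marked subregion'' of $C$ is always a nonempty subtree, and as long as it has an edge leading to the unmarked complement the reduction is forced to continue on $C$. Consequently the terminal state consists of the untouched free tree together with isolated marked vertices spanning each non-free component. Claim (3) is then just a rephrasing of this terminal description component by component, and claim (4) follows because the terminal state is a function of the initial decorated forest alone, independent of any choices made during reduction.

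The main obstacle I anticipate is the careful verification inside (2) that the process on each non-free component is forced to run to completion — one must rule out terminating with some surviving unmarked vertex or surviving edge inside $C$. The potential worry is a fragment of $C$ that has drifted away from all marked vertices. Because $C$ is a tree, however, deleting an edge incident to a marked vertex splits $C$ into subtrees, and the subtree on the $j$-side inherits the newly marked vertex $j$ while the $i$-side retains $i$; so every nonempty subtree that appears during the process still contains a marked vertex, and induction on the number of remaining edges in $C$ closes the argument. Once this local confluence on each component is established, statement (4) is immediate.
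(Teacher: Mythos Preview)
Your proof plan is correct; the invariant that marked vertices and deleted edges remain confined to the initially marked components, together with the tree-splitting observation for part (2), is exactly what is needed. The paper itself offers no argument beyond declaring the lemma an ``obvious observation'' and appending \qed, so your treatment is simply more detailed than the authors deemed necessary.
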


Before we proceed with the proof of Theorem 1, prove the lemma:
\begin{lemma}\label{LemmaSmallDet}
\begin{enumerate}
    \item For each decorated forest $F$, $$|Det(F)|= N(F).$$
    \item If a collection $\{e_{ij}, r_k\}$ does not come from a decorated forest, that is, violates condition (2) from Definition \ref{DefDecForest}, then
$$|Det(e_{ij}, r_k)|= 0.$$
\end{enumerate}
\end{lemma}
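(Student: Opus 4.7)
The strategy is to realize the reduction of decorated forests as a sequence of elementary column operations on the matrix whose determinant appears in $(*)$. The key observation is that, after the factor $n^{|M|}$ has been pulled out, the column indexed by a marked vertex $k\in M$ is precisely the standard basis vector $e_k$. Hence, for any edge $(ij)\in I$ with $i$ marked, the column operation $q_{ij}\mapsto q_{ij}+r_i=(e_j-e_i)+e_i=e_j$ leaves the determinant unchanged, and it is exactly the matrix-level counterpart of the reduction step ``a mark at $i$ kills the edge $(ij)$ and generates a mark at $j$''.

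For part (1), I apply the reduction to every marked component of $F$. Since each step is an elementary column operation, the absolute value of the determinant is unchanged. After the full reduction, the matrix has a marker column $e_k$ for every vertex $k\in[n]\setminus V(T)$ (where $T$ is the free tree), a signed incidence column $e_j-e_i$ for every edge of $T$, and the all-ones column $e$. Expanding cofactors along each marker column (each has a single nonzero entry $1$ in row $k$) eliminates these columns together with the rows indexed by $[n]\setminus V(T)$, and leaves an $N(F)\times N(F)$ matrix $A$ whose columns are the signed incidences of the edges of $T$ together with the restriction $e|_{V(T)}$. To finish, I replace the last row of $A$ by the sum of all rows: each incidence column sums to $0$ and $e|_{V(T)}$ sums to $N(F)$, so the new last row becomes $(0,\ldots,0,N(F))$. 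Expanding along it gives $|\det A|=N(F)\cdot|\det B_0|$, where $B_0$ is the signed incidence matrix of $T$ with one row deleted. Since $T$ is a tree, an easy induction peeling off leaves (or the matrix-tree theorem applied to $T$ itself) gives $|\det B_0|=1$, whence $|\det A|=N(F)$.

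For part (2) I distinguish two subcases. If the graph $G=([n],I)$ contains a cycle $(i_1 i_2),(i_2 i_3),\ldots,(i_\ell i_1)$, then the corresponding columns $q_{i_k i_{k+1}}$ satisfy an obvious signed linear dependence coming from the cycle orientation, so the determinant vanishes. Otherwise $G$ is a forest, but some component $C$ contains two distinct marked vertices $i,j$. I run the reduction within $C$ along a simple path in $C$ from $i$ to $j$: the column operation that would ``mark $j$'' transforms the corresponding column $q_{kj}$ into $e_j$, but $e_j$ is already present as the marker column of $j$. Two equal columns then appear and the determinant is zero.

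The main obstacle is the last subcase of part (2): one has to verify that once two marks sit in the same component the reduction necessarily produces a column collision. The argument is conceptually simple, but it requires being careful that the propagation actually reaches $j$ (this is where the connectedness of $C$ is used) and that earlier reduction steps do not prevent this path from being traversed. Both points are immediate once one follows an explicit simple path from $i$ to $j$ in the tree component $C$, processing its edges in order.
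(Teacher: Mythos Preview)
Your proof is correct and follows essentially the same route as the paper: interpret the reduction of a decorated forest as a sequence of elementary column operations, reduce the matrix to a block form with an identity block coming from the marked vertices and a tree block coming from the free tree, and then compute the remaining $N(F)\times N(F)$ determinant. Your write-up is in fact more careful than the paper's sketch in two places: you spell out the row-summing trick that extracts the factor $N(F)$ from the tree block (the paper's line ``Its determinant equals $1$'' is a slip---the value is $N(F)$, as the lemma claims), and in part~(2) you separate out the cycle subcase explicitly, which the paper's proof does not mention.
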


\textit{Proof of the lemma.} \textit{(1)} For a decorated forest, we manipulate with the columns according to the reduction process. We arrive at a matrix which (up to a permutation of the columns and up to a sign) is:
$$\left(
    \begin{array}{ccc}
      A & O &1\\
      O & E &1\\
    \end{array}
  \right).
$$
Here $A$ is the matrix corresponding to the free tree, $E$ is the unit matrix, and the very last column is $e$. Its determinant equals $1$.

\textit{(2)} If the collection of vectors does not yield a decorated forest, that is, there are two marked vertices on one connected component, the analogous reduction gives a zero column.\qed

\bigskip

Basing on Lemmata \ref{LemmaSmallDet} and \ref{lemmaVol}, we  conclude:
$$\Vol( \mathcal{CP}_{n+1}) = \frac{1}{\sqrt{n}}\,\sum_{F } (-n)^{|M(F)|} \cdot N(F), $$
where the sum extends over all decorated forests $F$ on $n$ vertices. (Remind that  $M(F)$ is the set of marked vertices, $N(F)$ is the number of vertices of the free tree.)

Next, we group the forests by the number $N=N(F)$ and write
$$\Vol( \mathcal{CP}_{n+1}) = \frac{1}{\sqrt{n}}\sum_{ N = 1}^{n} \binom{n}{N} N^{N-2} \cdot N \sum_f (-n)^{C(f)} =$$
$$= \frac{1}{\sqrt{n}}\sum_{ N = 1}^{n} \binom{n}{N} N^{N-1}\sum_f (-n)^{C(f)}=\frac{1}{\sqrt{n}}\cdot(**),$$
where the second sum ranges over all rooted forest on $(n-N)$ labeled vertices, $C(\cdot)$ is the number of connected components.

Let us explain this in more details:
(1) $N$ ranges from $1$ to $n$. We choose $N$ vertices in $\binom{n}{N}$ different ways and place a tree on these vertices in $ N^{N-2}$ ways.
    (2) On the rest of the vertices we place a rooted forest.

Since $t_{n-N,k}$ the number of forests on $(n-N)$ labeled vertices of $k$ rooted trees, we write:
$$(**)= \sum_{ N = 1}^{n} \binom{n}{N} N^{N-1}\sum_{k=1}^{n-N} (-n)^{k}\cdot t_{n-N,k}.$$

Section \ref{SecAbel} gives us:
$$\sum_{k = 0}^{n} t_{n,k}x^{k} = A_{n}(x),$$
where $A_{n}(x) = x(x+n)^{n-1}$ is the {Abel polynomial}.

Setting $-n = x$, we get
$$\sum_{k=1}^{n-N} (-n)^{k}\cdot t_{n-N,k} =  A_{n-N}(-n).$$

Thus $(**)$  converts to

$$\sum_{N = 1}^{n}\binom{n}{N} N^{N - 1}A_{n - N}(-n) =: Q_{n}.$$

Applying the definition of $A_{n-N}(-n)$, we get
$$Q_{n}  = \sum_{N = 1}^{n}\binom{n}{N} N^{N - 1} (-n)(- n + n - N)^{n-N-1} =  (-1)^{n} n \cdot \sum_{N = 1}^{n}(-1)^{N}\binom{n}{N} N^{n - 2} .$$

Introduce the following polynomial:
$$p(x) := \sum_{N = 0}^{n}N^{n - 2}\binom{n}{N}  x^{N},$$
for which we have $Q_{n} = p(-1)$. Set also
$$p_{0}(x) := (1 + x)^{n} = \sum_{N = 0}^{n} \binom{n}{N} x^{N},$$
$$p_{i}(x):= x\cdot p'_{i-1}(x) = \sum_{N = 0}^{n} N^i\binom{n}{N}
 x^{N}.$$
We clearly have $p(x) = p_{n-2}(x)$. Besides, $(1+x)^{n - k}$ divides $p_{k}(x)$,  which implies $Q_{n} = 0$. \qed

\section{Homologies of the face poset of cyclopermutohedron}\label{SectHomol}

Since the face poset of $\mathcal{CP}_{n+1}$ is isomorphic to the complex $CP_{n+1}$ (defined in the Introduction),
 in the section we shall work with the latter complex.

 Let us make the following  conventions that are illustrated in Figure \ref{FigExLabel}.
First remind that $k$-cells  of the complex are labeled by (all possible) cyclically ordered partitions of the set $[n+1]=\{1,...,n+1\}$ into $(n-k+1)$ non-empty parts,
so the number of parts is at least $3$.

\begin{enumerate}
  \item Instead of ''the cell of the complex $CP_{n+1}$ labeled by $\alpha$'' we say for short ''the cell $\alpha$''.

  \item For a cell $\alpha$ the \textit{$(n+1)$-set} is the set in the partition containing the entry $n+1$.
  \item We represent a cyclically ordered partition $\alpha$ as a linearly ordered partition of the same set $[n+1]$ by cutting the circle right after the  $(n+1)$-set. For example, the labels depicted in   Fig. \ref{FigExLabel} we write as $(\{6\} \{1,3,5\}\{2\} \{4,7\})$ and $(\{6\} \{1,2,3,5\} \{4,7\})$ .

In particular, the vertices of the complex ${CP}_{n+1}$ are labeled by (all possible) permutations of the set $[n+1]$ ending with the entry $n+1$.
Therefore we have a map $\sigma: \V(CP_{n+1}) \rightarrow S_{n}$: the removal of $\{n+1\}$ from the label gives an element of the symmetric group  $S_{n}$.
  \item For a cell $\alpha$ we denote by $|\alpha|$ the number of parts in the partition. Remind that we always have  $|\alpha|\geq 3$.
  \item For $i\in [n+1]$ and a set $I \subset [n+1]$ we write $i<I$ whenever $j\in I$ implies $i<j$.
    \item A \textit{singleton} is a one-element set.
\end{enumerate}

\subsection{Discrete Morse function on the complex $CP_{n+1}$}
Let us introduce a discrete Morse function  on the cells of the complex $CP_{n+1}$.
It is going to be a \textit{perfect} Morse function, and therefore will give us the homology groups directly.

{\textbf{Step 1.}}
We pair together two cells
$$\alpha=\big(\cdots \ \{1\}\ I\ \cdots\big)  \hbox{ and } \beta=\big(\cdots\ \{1\}\cup I\ \cdots\big)$$ iff  $n+1\notin I$.

\medskip

We proceed for all $2 \leq k< n$, assuming that the $k$-th step is:

{\textbf{Step k.}}
We pair together two cells
$$\alpha=\big(\cdots\ \{k\}\ I\ \cdots\big)  \hbox{ and } \beta=\big(\cdots\ \{k\}\cup I\ \cdots\big)$$
iff the following holds:
\begin{enumerate}
	\item $\alpha$ and $\beta$ were not paired at any of the previous steps.
    \item  $n+1\notin I$.
    \item $k<I$.
\end{enumerate}

\medskip

\textbf{Examples.}
The cell $(\{2\} \ \{4,3\} \ \{1\} \ \{5,6\})$ is paired with the cell  $(\{2,4,3\} \ \{1\} \ \{5,6\})$ on the second step.
The cell $(\{4\} \ \{5\} \ \{3\} \ \{1\} \ \{6,2\})$ is paired with the cell  $(\{4, 5\} \ \{3\} \ \{1\} \ \{6,2\})$ on the fourth step.
The cell $(\{4\} \ \{3\} \ \{2\} \ \{1\} \ \{5,6\})$ is not paired.

\medskip

It is convenient to reformulate the above  as an algorithm for finding a pair  for a given cell $\alpha$ (if such a pair exists).

\medskip

\textbf{Pair search algorithm.}

Take a cell $\alpha$.
\begin{enumerate}
\item[I.] We call an entry  $k \in [n]$ \textit{forward-movable in} $\alpha$ if
	\begin{enumerate}
  		\item[$1^{\circ}$] $\alpha$ consists of more than three subsets;
  		\item[$2^{\circ}$] $k$ forms a singleton in $\alpha$;
  		\item[$3^{\circ}$] the singleton $\{k\}$ is followed by a set $I$ such 	that  $k<I$ and $n+1\notin I$.
	\end{enumerate}
\item[II.] We call an entry $k \in [n]$ \textit{backward-movable in} $\alpha$ if
	\begin{enumerate}
		\item[$1^{\diamondsuit}$] the entry $k$ lies in $\alpha$ in a non-singleton set $J$, such that $n+1\notin J$;
		\item[$2^{\diamondsuit}$] $k = min(J)$;
		\item[$3^{\diamondsuit}$] one of the following conditions holds:
 			\begin{enumerate}
				\item the set $J$ is preceded by a non-singleton set;
				\item the set $J$ is preceded by a singleton $\{m\}$ with $m > k$;
				\item the set $J$ is preceded by a set containing $n+1$.
			\end{enumerate}
	\end{enumerate}
\end{enumerate}

In this notation, the algorithm looks as follows:
\begin{enumerate}
  \item If a cell $\alpha$ has no movable entries, $\alpha$ is not paired.
  \item Assuming that a cell $\alpha$ has  movable entries, take the minimal movable (either forward or backward) entry $k$ in $\alpha$. Then the cell $\alpha$ is paired with a cell that is formed from $\alpha$ by moving $k$ either forward inside the next set, or backward out of the set containing $k$, according to the $k$-th step of pairing algorithm.
\end{enumerate}

\begin{lemma}
The above pairing  is a discrete Morse function.
\end{lemma}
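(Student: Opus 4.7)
My plan is to verify three facts: (a) the assignment defines a partial matching (each cell is paired with at most one other); (b) paired cells satisfy the facet--coface relation with dimensions differing by one; and (c) the resulting discrete vector field contains no closed gradient paths. Fact (b) is immediate, since merging a singleton $\{k\}$ with the neighbouring block $I$ decreases the number of parts by one and is an order-preserving refinement, so $\alpha$ is a facet of $\beta$ in $CP_{n+1}$.

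For (a), I will show that the pair-search algorithm is symmetric. Suppose applied to $\alpha = (\cdots \{k\} \, I \cdots)$ it identifies $k$ as the minimal movable entry (forward-movable) and outputs $\beta = (\cdots \{k\} \cup I \cdots)$. I need to verify that applied to $\beta$ it identifies the same $k$ as the minimal movable entry, now backward-movable. Conditions $1^{\diamondsuit}$ and $2^{\diamondsuit}$ for $k$ in $\beta$ are immediate from $|I|\ge 1$, $k = \min(\{k\}\cup I)$, and $n+1 \notin \{k\}\cup I$. Condition $3^{\diamondsuit}$ holds because otherwise the block preceding $\{k\}\cup I$ in $\beta$ would be a singleton $\{m\}$ with $m<k$ and $m\ne n+1$, which would make $m$ forward-movable in $\alpha$ in violation of the minimality of $k$. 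For minimality in $\beta$, no $k' < k$ can be movable there: since $k < I$, any such $k'$ lies in a block unchanged by the merge except possibly for its neighbouring block, and a brief case analysis of the movability conditions $1^{\circ}$--$3^{\circ}$ and $1^{\diamondsuit}$--$3^{\diamondsuit}$ reduces every scenario to $k'$ already being movable in $\alpha$, contradicting minimality. The case when $k$ is backward-movable in $\alpha$ is symmetric.

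The main obstacle is (c). I plan to introduce a rank $\rho(\alpha) = (k(\alpha), \sigma(\alpha))$ on paired $p$-cells, ordered lexicographically, where $k(\alpha)$ is the minimal movable entry and $\sigma(\alpha)$ is a secondary invariant such as the cyclic position of the block containing $k(\alpha)$ in the matched cell. The core lemma is that along a V-path $\beta_0 \to \alpha_1 \to \beta_1 \to \alpha_2 \to \cdots$ the sequence $k(\alpha_i)$ is non-decreasing. The proof proceeds by contradiction: assume some $k' < k(\alpha_i)$ is movable in $\alpha_{i+1}$, where $\alpha_{i+1}$ is a facet of $\beta_i$ distinct from $\alpha_i$. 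One then case-splits on whether $\alpha_{i+1}$ is obtained by re-splitting the same block of $\beta_i$ that produced $\alpha_i$ or a different block, and on whether $k'$ is forward- or backward-movable. In every case the movability of $k'$ in $\alpha_{i+1}$ transfers to movability of $k'$ in $\alpha_i$, contradicting the minimality of $k(\alpha_i)$.

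The secondary invariant $\sigma$ handles the equality case $k(\alpha_{i+1}) = k(\alpha_i)$ by forcing the block containing $k$ to shift in a consistent cyclic direction along the V-path, ruling out closed orbits with constant $k$. I expect the chief difficulty to lie in the case analysis for the main monotonicity lemma, particularly in handling backward-movable entries whose condition $3^{\diamondsuit}$ depends on the preceding block, since a different choice of splitting in forming $\alpha_{i+1}$ from $\beta_i$ can replace that preceding block and at first glance seems to enable new movability.
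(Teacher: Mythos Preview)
Your treatment of (a) and (b) is fine, though more elaborate than needed: since the pairing is defined step by step, with each step only touching cells not yet matched, the matching property is automatic; the paper simply states this.

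For (c), however, your core lemma is false. The quantity $k(\alpha_i)$ is \emph{not} non-decreasing along gradient paths. Take $n=9$ and
\[
\alpha_i=\bigl(\{5\}\,\{6\}\,\{2\}\,\{10,1,3,4,7,8,9\}\bigr).
\]
Here entries $1,3,4,7,8,9$ sit in the $(n{+}1)$-set and are immovable; $2$ is a singleton followed by the $(n{+}1)$-set, hence immovable; $6$ is a singleton followed by $\{2\}$ with $6>2$, hence immovable. The minimal movable entry is $5$ (forward), so $k(\alpha_i)=5$ and
\[
\beta_i=\bigl(\{5,6\}\,\{2\}\,\{10,1,3,4,7,8,9\}\bigr).
\]
Now take the facet
\[
\alpha_{i+1}=\bigl(\{5,6\}\,\{2\}\,\{3\}\,\{10,1,4,7,8,9\}\bigr)
\]
obtained by splitting off $\{3\}$ from the $(n{+}1)$-set. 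In $\alpha_{i+1}$ the singleton $\{2\}$ is followed by $\{3\}$ with $2<3$ and $10\notin\{3\}$, so $2$ is forward-movable and $k(\alpha_{i+1})=2<5$. Thus your proposed rank can drop, and no secondary invariant of the kind you sketch will repair this, since the primary coordinate already fails lexicographic monotonicity. The difficulty you anticipated (the preceding block being replaced) is not the real obstruction; the problem is that splitting the $(n{+}1)$-set can create an entirely new forward-movable singleton with small index downstream.

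The paper's argument for (c) is different and much shorter. It tracks, for each pair of entries $a<b$, their relative order in the partition. The key observation is that once $a$ lies in a block strictly after $b$'s block (an ``inversion''), this persists forever: the only merges are of the form $\{k\}\cup I$ with $k<I$, so $a$ and $b$ can never be brought back together with $b$ the singleton (that would force $b<a$), and splits do not affect pairs already in distinct blocks. Hence the set of inverted pairs is monotone along any path, and since a closed path would have to return to its starting configuration, no inversions can ever be created along it; one then checks easily that a nontrivial step must create at least one. This is the invariant you want for (c), not the minimal movable index.
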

\begin{proof} The conditions  ''each cell of the complex participates in at most one pair'', and
   ''in each pair, the cell $\alpha^p$ is a facet of the cell $\beta^{p+1}$'' hold automatically. The  ''no closed paths'' condition follows from the observation that no two entries  interchange their order during a path more than once.
\end{proof}

\begin{lemma}\label{Lemma_CritCells}
The critical cells of the above defined Morse function are exactly all the cells of the following two types:

\textbf{Type 1.} Cells labeled by
$(\; \spadesuit \  \{n+1, \dots\}\ ) ,$
where $\spadesuit$ is a string of singletons coming in decreasing order.

\textbf{Type 2.} Cells labeled by
$(\; \{i\}\   I \   \{n+1, \dots\}\;),$
where  $i<I$.

\end{lemma}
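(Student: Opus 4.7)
The plan is to verify both implications directly from the pair search algorithm, using the fact that a cell is critical precisely when none of its entries are movable (forward or backward).

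For the easy direction, I will check that Type 1 and Type 2 cells are critical. For Type 1 $(\{a_1\},\ldots,\{a_s\},\{n+1,\ldots\})$ with $a_1>\cdots>a_s$, condition $1^\diamondsuit$ rules out backward-movability since every non-$(n+1)$ part is a singleton, and $3^\circ$ rules out forward-movability because the successor of each $\{a_i\}$ is either $\{a_{i+1}\}$ with $a_i>a_{i+1}$, or the $(n+1)$-set. For Type 2, which has $|\alpha|=3$, condition $1^\circ$ kills forward-movability; and the unique non-singleton non-$(n+1)$ part $I$ is preceded by the singleton $\{i\}$ with $i<\min I$, so all three sub-conditions of $3^\diamondsuit$ fail.

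The harder direction is to show that every critical cell falls into one of the two types. I will write a critical cell as $\alpha=(A_1,\ldots,A_m)$ with $n+1\in A_m$ and $m\ge 3$, and analyse the structure position by position. My first observation is that $A_1$ is forced to be a singleton: it is cyclically preceded by $A_m$, which contains $n+1$, so $3^\diamondsuit$(c) holds automatically and $\min A_1$ is backward-movable unless $A_1$ is a singleton.

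The key step is the case $m\ge 4$, where I will show that every $A_p$ with $p\le m-1$ must be a singleton. If some $A_p$ with $2\le p\le m-1$ were non-singleton, then backward-unmovability of $\min A_p$ forces $A_{p-1}$ to be a singleton $\{m'\}$ satisfying $m'<\min A_p$; but then $\{m'\}$ sits in position $p-1\le m-2$, its successor $A_p$ avoids $n+1$, and $m'<A_p$, so all of conditions $1^\circ$, $2^\circ$, $3^\circ$ hold and $m'$ is forward-movable, contradicting criticality. Writing $A_p=\{a_p\}$ for $p\le m-1$, forward-unmovability of each $\{a_p\}$ with $p\le m-2$ gives $a_p>a_{p+1}$, hence $a_1>\cdots>a_{m-1}$ and $\alpha$ is Type 1. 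The remaining case $m=3$ is short: $1^\circ$ automatically blocks forward-movability. Writing $\alpha=(\{i\},A_2,A_3)$, if $A_2=\{j\}$ is a singleton then $\alpha$ is Type 1 when $i>j$ and Type 2 (with $I=\{j\}$) when $i<j$; if $A_2$ is non-singleton, backward-unmovability of $\min A_2$ forces $i<\min A_2$, i.e.\ $i<A_2$, which is Type 2.

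The main obstacle, and the heart of the argument, is the tension in the $m\ge 4$ case between the two movability types: any singleton $\{m'\}$ small enough to block backward-movability of a non-singleton neighbour $A_p$ is itself automatically forward-movable, and iterating this collapses the structure of a critical cell to a decreasing chain of singletons followed by the $(n+1)$-set.
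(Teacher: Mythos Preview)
Your proof is correct and follows exactly the approach the paper gives in its one-line proof (``These are exactly all the labels that do not have movable entries''): you simply carry out the case analysis the paper omits. The only small imprecision is in the Type~2 easy direction, where you speak of ``the unique non-singleton non-$(n+1)$ part $I$''; if $I=\{j\}$ is itself a singleton with $i<j$, there is no candidate set $J$ satisfying $1^{\diamondsuit}$ at all, so backward-movability fails trivially and the conclusion still holds.
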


\begin{proof} These are exactly all the labels that do not have movable entries.
\end{proof}

\textbf{Examples.}
$(\{4\} \ \{3\} \ \{2\} \ \{1\} \ \{5,6\}) $ is a critical cell of type 1,
$(\{1\} \ \{2,4,3\} \ \{5,6\})$ is a critical cell of type 2,
$(\{1\} \ \{2\} \ \{3,4,5,6\})$ is a critical cell of type 2.

\medskip

\begin{lemma}\label{LemmaNumCells}
For the discrete Morse function  there are exactly
$$2^n+\frac{n^2-3n-2}{2}$$
critical cells of dimension $n-2$, and  exactly
$$\binom{n}{k}$$
critical cells of dimension $k$ for all $0\leq k < n-2$.\qed
\end{lemma}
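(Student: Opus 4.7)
The plan is to count the critical cells of each type separately. The key observation drawn from Lemma \ref{Lemma_CritCells} is that a Type 2 cell always has exactly three parts, hence lives only in dimension $n-2$, whereas a Type 1 cell with $s$ leading singletons has $s+1$ parts and dimension $n-s$, so it can contribute in any dimension $0 \leq k \leq n-2$ (corresponding to $2 \leq s \leq n$).

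First I count Type 1 cells of dimension $k$. Such a cell has the form $(\{a_1\}\ \{a_2\}\ \cdots\ \{a_s\}\ \{n+1,\ldots\})$ with $a_1 > a_2 > \cdots > a_s$ and $s = n-k$. Because the leading singletons are forced into strictly decreasing order, the cell is fully determined by the unordered $s$-element subset $\{a_1,\ldots,a_s\} \subset [n]$, with the complement sitting together with $n+1$ in the last block. This gives $\binom{n}{s} = \binom{n}{k}$ Type 1 critical cells in dimension $k$.

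Next, Type 2 cells live only in dimension $n-2$ and are parametrized by a pair $(i, I)$ with $i \in [n]$ and $I$ a non-empty subset of $\{i+1,\ldots,n\}$; the remaining elements of $[n]$, if any, accompany $n+1$ in the last block. For each $i$ there are $2^{n-i}-1$ admissible $I$, so the total count is $\sum_{i=1}^{n-1}(2^{n-i}-1) = 2^n-n-1$.

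Finally, the two families are disjoint: a dimension-$(n-2)$ Type 1 cell has two leading singletons $\{a\}, \{b\}$ with $a>b$, while a Type 2 cell whose middle part $I$ is a singleton $\{j\}$ satisfies $i<j$, and a Type 2 cell with $|I|\geq 2$ has a non-singleton middle part and cannot be Type 1 at all. Assembling: for $0\leq k<n-2$ we obtain $\binom{n}{k}$, and for $k=n-2$ we obtain $\binom{n}{2} + (2^n-n-1)$, which simplifies to $2^n + (n^2-3n-2)/2$. All steps are elementary combinatorial bookkeeping; the only subtlety worth flagging is the disjointness verification just given, which prevents double-counting in the top dimension.
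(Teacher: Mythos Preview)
Your proof is correct and is precisely the routine count the paper leaves implicit (the lemma is stated with a bare \qed). The paper offers no separate argument, so your direct enumeration of Type~1 cells by the choice of the decreasing singleton set and of Type~2 cells by the pair $(i,I)$, together with the disjointness check in dimension $n-2$, is exactly what is intended.
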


\begin{lemma}\label{LemmaDIfferentials}
The boundary operators of the Morse complex  vanish.
\end{lemma}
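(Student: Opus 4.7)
The plan is to show that $[\beta:\alpha]=0$ for every pair of critical cells $\beta^k,\alpha^{k-1}$ of consecutive dimensions, by constructing a fixed-point-free, sign-reversing involution on the set of gradient paths from $\beta$ to $\alpha$.

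First, I would describe gradient paths explicitly. Because the pairing algorithm of Section~\ref{SSecDMF} is deterministic (always move the minimal movable entry), a gradient path $\beta,\alpha_1,\beta_1,\alpha_2,\ldots,\alpha_{m+1}=\alpha$ is encoded by the sequence of facets chosen at $\beta$ and at each $\beta_i$. For a Type 1 critical cell $\beta=(\{i_1\}>\cdots>\{i_s\},\{n+1\}\cup J)$ the only splittable part is the $(n+1)$-set, so an initial facet is determined by a partition $\{n+1\}\cup J=A\sqcup B$ with $n+1\in A$ together with a cyclic insertion order. The subsequent moves are then forced by Lemma~\ref{Lemma_CritCells} and the pair-search algorithm; a direct analysis shows that each path either terminates immediately at a critical facet or propagates through a predictable ``rotation'' of the split block across the ring of singletons, with each intermediate $\alpha_i$ and $\beta_i$ uniquely determined.

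Next, I would define the involution. In each non-trivial gradient path, locate the first position where a two-element non-singleton lies in a context that admits both cyclic orderings as continuations to $\alpha$, and swap that ordering. The resulting partner path still lands at $\alpha$ by the forced-continuation argument, and the two paths differ by exactly one transposition of cyclically adjacent cells. The edge case is a path of length one, where a facet of $\beta$ is itself critical; here the partner is the long rotational path that begins with the opposite cyclic split and returns to the same critical endpoint after rotating the extra element through all the singletons. The toy case $n=4$, $\beta=(\{3\},\{2\},\{1\},\{5,4\})$, where the direct path $\beta\to(\{4\},\{3\},\{2\},\{1\},\{5\})$ is partnered with the length-seven rotational path ending at the same critical vertex, illustrates this general phenomenon.

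The main obstacle will be bookkeeping the signs. In Forman's framework, the contribution of a gradient path is a product of incidence numbers $[\beta_{i-1}:\alpha_i]$ and $[\beta_i:\alpha_i]$ relative to fixed orientations of the cells. The plan is to orient each cell by a canonical order on its parts (for instance, the linearization produced by cutting right after the $(n+1)$-set) and to verify that swapping the cyclic position of a two-element block flips the sign of exactly one factor in this product, so that partnered paths contribute with opposite signs. A parallel but slightly richer case analysis is needed for Type 2 critical cells $(\{i\},I,\{n+1\}\cup K)$ in dimension $n-2$, where the non-singleton $I$ as well as the $(n+1)$-set may be split initially; once the involution and sign calculation are verified in both cases, $[\beta:\alpha]=0$ follows and hence $\partial_k=0$ for every $k$.
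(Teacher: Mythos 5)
Your plan is essentially the paper's own argument: the paper first shows (Lemma~\ref{lemmaCriPaths}) that two critical cells are joined by either no gradient paths or exactly two, the pair differing only in the initial split/insertion choice at $\beta$ with all subsequent moves forced (including the short path paired with the long ``rotational'' one), and then verifies via the canonical orientations defined at the principal vertex (Lemmas~\ref{lemmaChangeOr}, \ref{lemmaFirstStep}, \ref{lemmaElemStep}) that exactly one of the two first steps reverses orientation while every later step preserves it, so the two paths cancel and $[\beta:\alpha]=0$. Your ``sign-reversing involution'' is precisely this pairing of the two paths, so the approach is the same; just note that the deferred items --- a genuine orientation of each cell (an order on the parts alone does not orient a product of permutohedra) and the parity bookkeeping showing the swap flips exactly one incidence sign --- constitute the actual technical content of the paper's Section~\ref{SecProofs}.
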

Theorem 2  \ follows from the above two lemmata and  Section \ref{SSecDMF}.

\bigskip

The proof of the Lemma \ref{LemmaDIfferentials} is contained in the next  section.
The detailed proof  is somewhat technical, but the idea is simple: we show that for each pair of critical cells $\alpha$ and $\beta$ either there are no gradient paths
leading from $\beta$ to $\alpha$, or there are exactly two paths   coming with different orientations.

\section{Proof of Lemma \ref{LemmaDIfferentials}}\label{SecProofs}

\subsection{Gradient paths}
Let us describe the gradient paths connecting critical cells $\beta^{p+1}$ and $\alpha^{p}$.

\begin{lemma}
There are no critical gradient paths  that end at critical cells of type $2$.
\end{lemma}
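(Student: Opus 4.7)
The plan is to settle the statement by a single dimensional observation, with no case analysis required. The first step is to recall from Lemma~\ref{Lemma_CritCells} that a critical cell of type~2 is labeled by a cyclic partition $(\{i\}\ I\ \{n+1,\dots\})$ with exactly three parts. Since a $k$-cell of $CP_{n+1}$ corresponds by definition to a cyclic partition into $n-k+1$ non-empty parts, a three-part label gives dimension $k=n-2$.

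The second step is to note that $n-2$ is the top dimension of the whole complex: cyclic partitions are required to have at least three parts, so no cell of dimension greater than $n-2$ exists in $CP_{n+1}$. Now I would invoke the definition of a gradient path from Section~\ref{SSecDMF}: such a path terminating at a $p$-cell has the shape
\[
\beta_0^{p+1},\ \alpha_1^p,\ \beta_1^{p+1},\ \dots,\ \alpha_m^p,\ \beta_m^{p+1},\ \alpha_{m+1}^p,
\]
and always begins with a cell $\beta_0^{p+1}$ of dimension one larger than the terminus, independently of the length parameter $m\geq 0$. If the terminus $\alpha_{m+1}^p$ is a type~2 critical cell, then $p=n-2$, and the required $\beta_0^{n-1}$ would have to exist in $CP_{n+1}$ — but it does not.

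I do not expect any real obstacle here: the entire argument collapses to the top-dimensionality of type~2 critical cells combined with the mandatory shape of a gradient path. As a byproduct, this observation also shows that the component $\partial_{n-1}\colon \mathcal{M}_{n-1}\to\mathcal{M}_{n-2}$ of the Morse differential is vacuously zero, its domain being empty; hence every type~2 critical cell contributes directly to $H_{n-2}$ without cancellation, which is consistent with the rank count in Lemma~\ref{LemmaNumCells}.
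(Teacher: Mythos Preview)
Your argument is correct and is exactly the paper's approach: the paper's own proof is the single sentence ``Critical cells of type~2 have the maximal possible dimension,'' which is precisely the dimensional observation you spell out in detail. Your additional remark on $\partial_{n-1}$ being vacuously zero is a harmless elaboration consistent with the paper's framework.
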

Proof. Critical cells of type $2$ have the maximal possible dimension.
\qed

\begin{lemma}\label{lemmaCriPaths}
The three following cases describe all gradient paths joining critical cells:

\begin{enumerate}
\item   $\hbox{ Let }\beta=( \spadesuit \    \{n+1, \dots\} ) \hbox{ and } \alpha=( \heartsuit \  \{n+1, \dots\} )$ be two cells of type 1. Then
	there are two gradient paths from $\beta$ to $\alpha$ iff $\heartsuit = \spadesuit \cup {k}$ for some $k$.
\item Let $\beta=(\{i\} \  \{j,k\} \    \{n+1, \dots\}) \hbox{ and } $ \newline $\alpha=(\{k\} \ \{j\} \ \{i\} \ \{n+1, \dots\} )$ be cells of type 2 and 1 respectively. Then there are two gradient paths from $\beta$ to $\alpha$.
    \item Let $\beta=( \{i\} \  \{j\} \  \{n+1, \dots\}) \hbox{ and } \alpha=( \spadesuit \  \{n+1, \dots\})$ be cell of type 2 and 1 respectively. Then there are two gradient paths from $\beta$ to $\alpha$ iff  $\spadesuit$ consists of three singletons, two of which are $\{i\}$ and $\{j\}$.	
	
\end{enumerate}
\end{lemma}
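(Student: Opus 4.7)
The plan is to describe explicitly all gradient paths between two critical cells $\beta^{p+1}$ and $\alpha^p$, and verify in each of the three listed cases that exactly two such paths exist.

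First I analyse a single gradient step $\beta_{i-1}\to\alpha_i\to\beta_i$. The facet $\alpha_i$ is obtained from $\beta_{i-1}$ by splitting one part into two adjacent parts; $\beta_i$ is then the pair partner of $\alpha_i$. Since $\beta_i$ has higher dimension, the minimal movable entry $k$ of $\alpha_i$ must be forward-movable, and $\beta_i$ is produced by merging the singleton $\{k\}$ with the part immediately following it. The net effect of a step is thus either trivial (the merge reverses the split) or an elementary \emph{shift}: a single element jumps from its part to the adjacent part in cyclic order, with legality dictated by the minimal-movable-entry rule.

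Next I record two invariants preserved along a gradient path connecting critical cells. (i)~The minimal-movable rule forces the smallest eligible index to move at each step, so the relative cyclic order of the remaining entries is preserved and shift cascades propagate from small indices to large. (ii)~The part containing $n+1$ is \emph{absorbing}: neither of its neighbours satisfies the condition $n+1\notin I$ required for forward-movability, so no forward merge can pull an element out of it. Consequently the $(n+1)$-set of $\alpha$ may differ from that of $\beta$ by at most a single element, and only by acquiring it through an immediately adjacent shift.

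I then treat the three cases. Writing $\spadesuit=(\{a_1\},\dots,\{a_s\})$ with $a_1>\dots>a_s$, Case~1 requires the element $k$ to be extracted from the $(n+1)$-set of $\beta$ and deposited as a singleton in the slot $a_j>k>a_{j+1}$ of $\spadesuit$. The two paths correspond to the two cyclic sides on which $\{k\}$ can be split off the $(n+1)$-set; in each case the minimal-movable rule then forces a unique cascade of alternating merges and splits that walks $\{k\}$ into its target slot. Case~3 is analogous: the singleton of $\spadesuit$ distinct from $\{i\}$ and $\{j\}$ is extracted from the $(n+1)$-set on one of two cyclic sides, after which the cascade is again forced. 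In Case~2 the two paths come from the two admissible orderings of the split of $\{j,k\}$; once that split is performed, the pairing rule determines the rest of the path uniquely. The toy example $n=3$ sketched in the introduction illustrates exactly these mechanisms.

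The main obstacle is to show that no other initial facet of $\beta$ leads to a critical cell $\alpha$ outside the listed configurations. For this I rely on invariant~(ii) to rule out splits of $\{n+1,\dots\}$ that push an unwanted element into the absorbing part, and on invariant~(i) to show that after any other admissible initial split the forced trajectory either terminates at a different critical cell or stalls at a non-critical cell whose pair partner lies at a lower dimension, so that the path cannot continue. The argument is naturally organised as an induction on the number of shifts performed, tracking the current minimal movable entry and eliminating intermediate configurations that cannot appear along a critical-to-critical path.
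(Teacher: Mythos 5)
Your overall plan — at most two admissible first splits, a forced continuation dictated by the minimal-movable-entry rule, and the observation that nothing ever re-enters the $(n+1)$-set — is the same case analysis the paper carries out, but two of the structural claims you build it on are false or unjustified, and the completeness half (``these describe \emph{all} paths'') rests on them. First, it is not true that the net effect of a gradient step is either trivial or ``a single element jumps to the adjacent part'': a facet step may detach a block of size greater than one, and the paired merge is governed by the minimal movable entry of the facet, which can lie away from the split. Concretely, for $n=5$ start at the type-1 critical cell $\beta=(\{5\}\ \{3\}\ \{1\}\ \{6,4,2\})$; the facet $(\{5\}\ \{3\}\ \{1\}\ \{2,4\}\ \{6\})$ is legal, its minimal movable entry is $1$ (forward-movable into $\{2,4\}$), so the next cell is $(\{5\}\ \{3\}\ \{1,2,4\}\ \{6\})$. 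This is a genuine gradient step issuing from a critical cell, and it is not a one-element shift: the block $\{2,4\}$ leaves the $(n+1)$-set at once. Likewise your invariant (i) is not a path invariant: along the true paths entries do interchange (e.g.\ $i$ and $j$ in case 2 — the Morse property only guarantees each pair swaps at most once), so an ``induction on the number of shifts'' cannot be run as stated.

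Second, the assertion that the $(n+1)$-set of $\alpha$ differs from that of $\beta$ by at most one element does not follow from the ``absorbing'' observation alone (which, stated in the correct direction, says that no merge ever inserts an element \emph{into} the $(n+1)$-set, so that set can only lose elements along a path). To bound the loss by one you must also use $\dim\beta=\dim\alpha+1$ together with the fact that every terminal critical cell is of type 1, all of whose non-$(n+1)$ parts are singletons: if two elements left the $(n+1)$-set, $\alpha$ would have at least $|\spadesuit|+2$ parts outside it and hence the wrong dimension. This counting step is exactly the paper's opening move ($|\heartsuit|=|\spadesuit|+1$ and nothing joins the $(n+1)$-set, hence $\heartsuit=\spadesuit\cup\{k\}$), and it is also what disposes of multi-element splits such as the one above once the target $\alpha$ is fixed. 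With that ingredient added, your ``two cyclic sides of the split plus forced cascade'' description does reduce the lemma to the finite verification that the paper records by listing the two paths in each case; without it, the exclusion of all other critical-to-critical paths is not established.
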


We start with  examples:

\begin{enumerate}

  \item  $\hbox{ For }\beta=(\{5\} \  \{3\} \  \{1\} \ \{6,4,2\}) \hbox{ and } \alpha=(\{5\} \ \{3\} \ \{2\} \ \{1\} \ \{6,4\} )$
the two paths are:

$$ (\{5\} \ \{3\} \ \{1\} \  \{6,4,2\} )$$
$$ (\{5\} \  \{3\} \  {\{1\}} \  \{2\} \  \{6,4\} )$$	
$$ (\{5\} \  \{3\} \  \{1,2\} \  \{6,4\} )$$
$$ (\{5\} \  \{3\} \  \{2\} \  \{1\} \  \{6,4\} )$$
and
$$(\{5\} \ \{3\} \ \{1\} \ \{6,4,2\} )$$
$$({\{2\}} \  \{5\} \  \{3\} \  \{1\} \ \{6,4\} )$$
$$(\{2,5\} \  \{3\} \ \{1\} \  \{6,4\} )$$
$$(\{5\} \  {\{2\}} \  \{3\} \  \{1\} \ \{6,4\})$$
$$(\{5\} \  \{2,3\} \  \{1\} \ \{6,4\} )$$
$$(\{5\} \  \{3\}  \ \{2\}  \ \{1\} \ \{6,4\} ).$$

\medskip

  \item
$\hbox{ For } \beta =(\{i\} \ \{j,k\} \  \{n+1, \dots\}) \hbox{ and } \alpha = (\{k\} \ \{j\} \ \{i\}  \  \{n+1, \dots\}), $ where $i<j<k$, the two  paths are:
$$ ( \{i\} \  \{j, k\} \  \{n+1, \dots\})$$
$$ ( {\{i\}} \  \{j\}  \ \{k\} \ \{n+1, \dots\})$$
$$ ( \{i,j\} \  \{ k\} \  \{n+1, \dots\})$$
$$ ( \{j\}  \ {\{i\}}  \ \{k\} \ \{n+1, \dots\})$$
$$ ( \{j\} \  \{i,k\} \ \{n+1, \dots\})$$
$$ ( {\{j\}} \  \{k\} \  \{i\} \ \{n+1, \dots\})$$
$$ ( \{j,k\} \  \{i\} \ \{n+1, \dots\})$$
$$ ( \{k\}  \ \{j\}  \ \{i\} \ \{n+1, \dots\})$$
and
$$ ( \{i\} \   \{j, k\}  \ \{n+1, \dots\})$$
$$ ( {\{i\}}  \ \{k\} \    \{j\} \  \{n+1, \dots\})$$
$$ (\{i,k\} \    \{j\}  \ \{n+1, \dots\})$$
$$ ( \{k\} \ {\{i\}}  \   \{j\} \  \{n+1, \dots \})$$
$$ (  \{k\}  \   \{i,j\} \  \{n+1, \dots\})$$
$$ ( \{k\} \  \{j\} \  \{i\} \ \{n+1, \dots\}).$$

\medskip

  \item
  $\hbox{For }\beta =( \{i\}  \ \{j\}  \ \{n+1, \dots\, k\}) \hbox{ and } \alpha = (  \spadesuit \  \{n+1, \dots\}) $ there are three possible cases:

\textbf{Case 1.}
For  $k<i<j$ the two paths from $\beta$ to $\alpha$ are:
$$ ( \{i\}  \ \{j\}  \ \{n+1, \dots\, k\})$$
$$ ( {\{i\} } \  \{j\} \  \{k\} \ \{n+1, \dots\})$$
$$ ( \{i,j\} \  \{k\} \ \{n+1, \dots\})$$
$$ (  \{j\} \  \{i\}  \ \{k\} \ \{n+1+1, \dots\})$$
and
$$ ( \{i\} \  \{j\} \  \{n+1, \dots\, k\})$$
$$ ({\{k\} } \ \{i\} \  \{j\}  \  \{n+1, \dots\})$$
$$ ( \{k,i\} \  \{j\} \  \{n+1, \dots\})$$
$$ ( \{i\} \ {\{k\} } \   \{j\} \  \{n+1, \dots\})$$
$$ ( \{i\} \  \{k,j\}  \ \{n+1, \dots\})$$
$$ (  \{j\} \  \{i\} \  \{k\} \ \{n+1, \dots\}).$$

\medskip
	
\textbf{Case 2.}  For  $i<k<j$ the two paths are:
$$ ( \{i\} \  \{j\} \  \{n+1, \dots\, k\})$$
$$( {\{i\} } \ \{j\}  \ \{k\} \ \{n+1, \dots\})$$
$$( \{i,j\} \  \{k\} \ \{n+1, \dots\})$$
$$(  \{j\} \  {\{i\}} \ \{k\} \ \{n+1, \dots\})$$
$$( \{j\} \  \{i,k\} \ \{n+1, \dots\})$$
$$( \{j\}  \ \{k\} \ \{i\} \ \{n+1, \dots\})$$
and
$$( \{i\} \  \{j\} \  \{n+1, \dots\, k\})$$
$$(\{k\} \ {\{i\} } \  \{j\} \   \{n+1, \dots\})$$
$$( \{k\} \  \{i,j\} \  \{n+1, \dots\})$$
$$({\{k\}} \ \{j\} \{i\} \  \{n+1, \dots\})$$
$$( \{k,j\} \  \{i\} \  \{n+1, \dots\})$$
$$( \{j\} \  \{k\} \ \{i\} \ \{n+1, \dots\}).$$

\medskip

\textbf{Case 3.} For $i<j<k$ the two paths are:
$$ ( \{i\} \  \{j\}  \ \{n+1, \dots\, k\})$$
$$( {\{i\} } \ \{j\}  \ \{k\} \ \{n+1, \dots\})$$
$$( \{i,j\} \  \{k\} \ \{n+1, \dots\})$$
$$(  \{j\} \ {\{i\}} \  \{k\} \ \{n+1, \dots\})$$
$$( \{j\} \  \{i,k\} \ \{n+1, \dots\})$$
$$({ \{j\} } \ \{k\} \ \{i\} \ \{n+1, \dots\})$$
$$( \{j,k\} \ \{i\} \ \{n+1, \dots\})$$
$$( \{k\} \  \{j\} \ \{i\} \ \{n+1, \dots\})$$
and
$$( \{i\} \  \{j\} \  \{n+1, \dots\, k\})$$
$$(\{k\} \ {\{i\} } \  \{j\}  \  \{n+1, \dots\})$$
$$( \{k\}  \ \{i,j\}  \ \{n+1, \dots\})$$
$$( \{k\} \ \{j\} \ \{i\}  \  \{n+1, \dots\}).$$
\end{enumerate}

\bigskip

Now prove the lemma.
\begin{proof}
Consider the case 1.
Suppose there is a path from $\beta$ to $\alpha$.
	Then $dim(\beta)= dim(\alpha) +1$, therefore, $|\heartsuit| = |\spadesuit| +1$.
	Since no entry joins the $(n+1)$-set during the path, we have $
	\heartsuit = \spadesuit \cup {k}$ for some k.
	
	Now we have $$\beta=(\spadesuit \  \{n+1, \dots\, k\} )$$ and $$\alpha=( \spadesuit \cup {k} \  \{n+1, \dots\}).$$
The existent gradient paths come from a simple case analysis.
The other two cases are proved in the similar way.
    \end{proof}

\subsection{Canonical orientations of cells}\label{SecOr}

Remind that two vertices of $CP_{n+1}$ are joined by an edge whenever their labels differ on a permutation of two neighbor entries. Such vertices we will call \textit{neighbors}.
For a cell $\alpha = (I_{1}  I_{2} \dots I_{l})$, a vertex  $V \in \alpha$  has  exactly   $\dim(\alpha)$  neighbors  that belong to $\alpha$. The latter are called  $\alpha$-\textit{neighbors} of $V$.
For every  $V \in \alpha$ we order the  $\alpha$-neighbors of $V$: we get the first neighbor of $V$ by interchanging the first\footnote{We use ''from left to right'' orientation on linearly ordered labels.} two entries of $V$ that belong to the same set $I_i$, the second neighbor we get by interchanging the second two entries of $V$ that belong to the same $I_i$, etc.
This ordering defines the \textit{orientation of the cell $\alpha$ related to the vertex }$V$.  Here we explore the following observation: the cells of the complex are combinatorially isomorphic to the product of permutohedra, and therefore can be realized as some convex polytopes.
More precisely, a cell labeled by $(I_1,...,I_m)$ is combinatorially isomorphic to $\Pi_{|I_1|}\times... \times \Pi_{|I_m|}$.
To fix an orientation on a polytope, it suffices to fix an order on all the vertices that are neighbors of a fixed vertex.

\textit{The principal vertex} $PR(\alpha)$ of the cell $\alpha$ is the vertex with the label $(\widehat{I_{1}} , \widehat{I_{2}}, \dots , \widehat{I_{l}})$, where $\widehat{I_{j}}$ is a partition of the set $I_{j}$ into singletons coming in increasing order.
The orientation of the cell $\alpha$  related to its principal  vertex  $PR(\alpha)$  is called the  \textit{the canonical orientation} of the cell $\alpha$.

\medskip

\textbf{Examples.}

(1) For the cell $\alpha = (\{1\} \ \{2, 4, 5\} \ \{3\} \ \{6, 7, 8\})$  and  its vertex \newline $V=(\{1\} \ \{4\} \ \{5\} \ \{2\} \ \{3\} \ \{7\} \ \{6\} \ \{8\})$
the $\alpha$-neighbors of $V$ are ordered as follows:

$V_{1} = (\{1\} \ \{5\} \ \{4\} \ \{2\} \ \{3\} \ \{7\} \ \{6\} \ \{8\} ),$

$V_{2} = (\{1\} \ \{4\} \ \{2\} \ \{5\} \ \{3\} \ \{7\} \ \{6\} \ \{8\} ),$

$V_{3} = (\{1\} \ \{4\} \ \{5\} \ \{2\} \ \{3\} \ \{6\} \ \{7\} \ \{8\} ), \  \hbox{etc.}$

\medskip

(2) For the cell $\alpha = (\{1,4,5\} \  \{2,3,7\} \  \{6,8\})$ the principal vertex is
$PR(\alpha) = (\{1\} \ \{4\} \ \{5\} \ \{2\} \ \{3\} \ \{7\} \ \{6\} \ \{8\}).$

\medskip

For a cell $\alpha$ and its vertex $V \in \V(\alpha)$, the permutation $\sigma_{V,\alpha} \in S_{n}$ is defined by
$$\sigma_{V,\alpha} \circ{\sigma(PR(\alpha))} =\sigma(V).$$

\begin{lemma}\label{lemmaChangeOr}
In the above notation the orientation of  a cell $\alpha$ related to a vertex $V$ equals $\sign(\sigma_{V,\,\alpha})$.\qed
\end{lemma}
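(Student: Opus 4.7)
My plan is a straightforward induction on the edge-distance from $PR(\alpha)$ to $V$ in the $1$-skeleton of the cell $\alpha$. The base case $V=PR(\alpha)$ is immediate: by definition the canonical orientation has sign $+1$, and $\sigma_{V,\alpha}=\mathrm{id}$ also has sign $+1$. For the inductive step, let $V$ and $V'$ be $\alpha$-neighbors joined by the edge that swaps the entries at adjacent positions $p$ and $p+1$, both lying in the same part $I_i$; denote by $k$ the index of this pair in the position-ordering on $V$.

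First I verify that $\mathrm{sign}(\sigma_{V',\alpha})=-\,\mathrm{sign}(\sigma_{V,\alpha})$. Since swapping positions $p,p+1$ in the one-line notation of a vertex amounts to right-multiplying by the transposition $s_p=(p,p+1)\in S_n$, we have $\sigma(V')=\sigma(V)\,s_p$. Hence
\[
\sigma_{V',\alpha}=\sigma_{V,\alpha}\cdot\bigl(\sigma(PR(\alpha))\,s_p\,\sigma(PR(\alpha))^{-1}\bigr),
\]
and the bracketed factor is a conjugate of a transposition, hence itself a transposition, giving the claimed sign flip.

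Next I argue that the orientation related to $V'$ is the opposite of the orientation related to $V$. The part structure, and therefore the enumeration of adjacent-position pairs, is the same at $V$ and $V'$: the $j$-th ordered $\alpha$-neighbor at either vertex is obtained by swapping the same $j$-th pair of positions. In particular the $k$-th neighbor of $V$ is $V'$ and vice versa, so the shared edge $\overline{VV'}$ occupies the $k$-th slot of both orderings, but the corresponding tangent vector is reversed (outgoing from $V$ versus outgoing from $V'$). The remaining basis vectors transport from $V$ to $V'$ in a sign-consistent way, so the resulting change of basis has determinant of sign $-1$ and flips the orientation class. Combining this with the first bullet and the inductive hypothesis gives $\mathrm{orient}(V')=-\mathrm{orient}(V)=-\mathrm{sign}(\sigma_{V,\alpha})=\mathrm{sign}(\sigma_{V',\alpha})$, completing the induction since the $1$-skeleton of $\alpha$ is connected.

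The main obstacle is the ``sign-consistent transport'' claim. When the $j$-th pair of positions is disjoint from $\{p,p+1\}$ the corresponding basis vector is literally unchanged, so the only delicate case is when the $j$-th pair shares an endpoint with $\{p,p+1\}$; there the vector is rescaled by a nontrivial factor depending on the entries of $V$. I would handle this by reducing to a single factor $\Pi_{|I_i|}$ via the product-of-permutohedra isomorphism of $\alpha$ and carrying out a brief case analysis in one-line notation inside that factor, which isolates the single sign change to the $k$-th basis vector. An alternative, cleaner route is to regard the orientation as the $\pm 1$-coloring of $\V(\alpha)$ determined by the rule ``flip sign along each edge,'' starting with $+1$ at $PR(\alpha)$; then (b) is built into the definition and the whole lemma follows immediately from (a).
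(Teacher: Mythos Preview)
The paper itself supplies no proof of this lemma at all: it is stated with an immediate \qed. So there is nothing to compare your argument against; any correct argument you give is already more than the paper offers.

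Your inductive strategy is sound, and your identification of the only nontrivial step---that the geometrically defined orientation flips along every edge of $\alpha$---is exactly right. To close the acknowledged gap cleanly, it is enough to do the computation in a single permutohedron factor $\Pi_m$. If the vertex labelled $\pi$ has ordered edge frame $v_q=e_{\pi(q)}-e_{\pi(q+1)}$ for $q=1,\dots,m-1$, then at the neighbouring vertex $\pi'=\pi s_p$ one finds
\[
v'_p=-v_p,\qquad v'_{p-1}=v_{p-1}+v_p,\qquad v'_{p+1}=v_p+v_{p+1},
\]
and $v'_q=v_q$ otherwise. The change-of-basis matrix is therefore the identity plus a shear in the two entries adjacent to $p$ and a sign reversal in the $p$-th entry; its determinant is $-1$. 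This is precisely your ``single sign change isolated to the $k$-th basis vector,'' so the obstacle dissolves once you write down the edge vectors explicitly. Your reduction to one factor is legitimate because edges in the other permutohedron factors are literally unchanged when you cross an edge in $\Pi_{|I_i|}$.

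One caveat about your ``alternative, cleaner route'': redefining the orientation as the bipartite $\pm1$-colouring does make the lemma tautological, but then it no longer proves the statement as the paper formulates it (via ordered edge frames at a vertex of a convex realization). For the applications in Section~\ref{sec:CA} only the colouring is used, so this shortcut is harmless in practice, but as a proof of the lemma \emph{as stated} it is circular.
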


\subsection{Boundary operators vanish }
\label{sec:CA}

Now we are ready to prove Lemma \ref{LemmaDIfferentials}.
As we have seen, each pair of critical cells is connected either by no paths or by exactly two paths. To show that in the latter case the paths come with different orientations (this is exactly what the lemma states) we analyze elementary steps in two auxiliary lemmata.

Assume we have a gradient  path
$$\beta^{p+1}_0,\, \alpha^p_1,\,\beta^{p+1}_1,\,\alpha^p_2,\,\beta^{p+1}_2,\dots,\,\alpha^p_k,\,\beta^{p+1}_k,\,\alpha^p_{k+1},$$
with  $\beta^{p+1}_0$ and $\alpha^p_{k+1}$  critical. By definition, two consecutive $\beta^{p+1}_i$ and  $\beta^{p+1}_{i+1}$ share a facet $\alpha^p_i$. To compute the sign of this path, we  compare the canonical orientations of $\beta^{p+1}_i$ and  $\beta^{p+1}_{i+1}$ . We also need to compare   the orientations of the cells $\beta^{p+1}_k$ and $\alpha^p_{k+1}$. We  are especially interested in the first steps of the paths (which can be of both types).

\bigskip

For a cell $ \beta$ and $k \in [n]$, denote by $N(\beta,\  k)$ (respectively, $M(\beta,\  k)$) the number of entries in the $(n+1)$-set of $\beta$ which are bigger (respectively, smaller) than $k$.

\begin{lemma} \label{lemmaFirstStep}
(First steps)
Now suppose we have two critical cells $\beta$ and $\alpha$ connected by two paths\footnote{As is described in  Lemma \ref{lemmaCriPaths}}. 
Then exactly one of the first steps in these paths has  disagreement in the canonical orientations.

More precisely, we have the following.

   \begin{enumerate}

   \item
   \begin {enumerate}

	\item    For $k>i$ the canonical orientations of the cells  $$\beta = (\spadesuit \  \{i\} \  \{n+1,k \dots\}) \hbox{ and } \beta'=(\spadesuit \  \{i,k\} \  \{n+1 \dots\} )$$
        agree iff $N(\beta,\  k)$ is odd.
	\item For $k<i$ the canonical orientations of the cells $$\beta = (\{i\} \ \spadesuit \  \{n+1,k \dots\}) \hbox{ and } \beta'=(\{i,k\} \ \spadesuit \ \{n+1 \dots\} )$$
     agree iff $M(\beta,\  k)$  and      $dim(\beta)$ have  different parity.

     	\item For $k>i$ the canonical orientations of the cells  $$\beta = (\spadesuit \    \{n+1,k \dots\}) \hbox{ and } \alpha=(\spadesuit \  \{k\} \  \{n+1 \dots\} ).$$
                 agree iff  $N(\beta,\  k)$  is odd.
        	\item For $k<i$ the canonical orientations of the cells $$\beta = ( \spadesuit \  \{n+1,k \dots\}) \hbox{ and } \alpha=(\{k\} \ \spadesuit \ \{n+1 \dots\} )$$
            agree iff  $M(\beta,\  k)$ and $dim(\beta)$ have  different parity.
\end{enumerate}
\item
 \begin {enumerate}
    \item  For $i<j<k$ the canonical orientations of the cells $$\beta=(\{i\} \ \{ j,k\} \ \dots) \hbox{ and } \beta'= (\{i,j\} \ \{k\} \ \dots)$$ always disagree.

     \item For $i<j<k$   the canonical orientations of the cells $$\beta = (\{i \} \ \{j,k\}\ \dots)\hbox{ and } \beta'=(\{i, k\} \ \{j\} \ \dots )$$ always agree.
\end{enumerate}
\item  \begin {enumerate}
    \item  For $i<j$ the canonical orientations of the cells $$\beta=(\{i\} \ \{ j\} \ \{n+1,k \dots\}) \hbox{ and } \beta'= (\{i,j\} \ \{k\} \ \{n+1 \dots\})$$ agree iff $N(\beta,\  k)$ is odd.

     \item  For $i<j$ the canonical orientations of the cells $$\beta=(\{i\} \ \{ j\} \ \{n+1,k \dots\}) \hbox{ and } \beta'= ( \{k\} \ \{i,j\} \ \{n+1 \dots\})$$ agree iff $M(\beta,\  k)$  and      $dim(\beta)$ have  different parity.
\end{enumerate}

       \end{enumerate}
       \end {lemma}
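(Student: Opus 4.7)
The nine subcases share a common proof method: a direct permutation-sign computation, using Lemma \ref{lemmaChangeOr} to convert the comparison of canonical orientations into a comparison of signs of elements of $S_n$.

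In each subcase I would first fix the shared facet $\alpha$ of the cells being compared (or the facet $\alpha$ of $\beta$ in cases (1c), (1d)) and take $V := PR(\alpha)$ as the common reference vertex. Because $V$ refines both labels in a manner compatible with their respective cyclic orderings, it is automatically a vertex of both cells under comparison. Then I would compute $\sigma_{V,\beta}$ explicitly: writing $\sigma(V)$ and $\sigma(PR(\beta))$ as one-line permutations of $[n]$, the two sequences differ only in the position of the single entry $k$, so $\sigma_{V,\beta}$ is a single cycle whose length I can read off. In cases (1a), (1c), (2), (3) this cycle has length $M+1$ and sign $(-1)^M$, while in (1b), (1d) the $(n+1)$-set of $\beta$ wraps around the cyclic cut of $V$ and the cycle goes the long way round, with a length involving $N$ (and $|\spadesuit|$). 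An analogous computation handles $\sigma_{V,\beta'}$ when $V \neq PR(\beta')$.

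To close, I would locate the non-$\alpha$ neighbor of $V$ in the ordered list of $\beta$-neighbors (and of $\beta'$-neighbors): this is the unique extra adjacent swap that $\beta$ admits but $\alpha$ does not. In cases (1a), (1c), (2), (3) it sits at the leftmost position in the $(n+1)$-set-block, while in (1b), (1d) it is the cyclic-wrap swap $(\{n+1\},\{k\})$, which by the left-to-right convention sits last. Moving this outward direction to the slot required by the boundary-orientation convention produces an extra sign $(-1)^{(\text{position}-1)}$. Combining this with the sign of $\sigma_{V,\beta}$ yields the incidence $[\beta:\alpha]$, and similarly $[\beta':\alpha]$; the claimed parity condition then follows by inspection of the product.

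The main technical obstacle is the bookkeeping for (1b), (1d). There the $(n+1)$-set of $\beta$ straddles the cut of $V$'s linear representation, so one must use the cyclic-adjacency convention carefully both to count $\beta$-swappable pairs at $V$ and to order them. The resulting cycle $\sigma_{V,\beta}$ naively depends on $|\spadesuit|$; the delicate point is to check that this dependence cancels against the sign from the outward-position adjustment, leaving only the parity condition on $N$ (or on $M$ and $\dim\beta$) asserted by the lemma. Everything else amounts to routine enumeration.
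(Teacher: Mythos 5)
Your proposal takes essentially the same route as the paper's proof: both compare orientations at a common reference vertex, using Lemma \ref{lemmaChangeOr} to turn the comparison into a count of adjacent transpositions (giving the $N$/$M$ parities) and then tracking the position of the single extra neighbor in the left-to-right neighbor ordering (giving the $\dim\beta$ parity in the wrap-around cases (1b), (1d)). In fact your base vertex $\Cen(\alpha)$ coincides with the paper's choice $\Cen(\beta')$ in every case, because the merged pair is always already in increasing order, so the two computations are the same up to bookkeeping.
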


\begin{proof}
We give the proof of some cases.

(1.a)
Note that $\Cen(\beta') \in \beta$. We have
$$\Cen(\beta) = (\dots \  \{i\}\dots \  \{k\} \ \dots \ \{n+1\dots\}),\hbox{ and}$$
 $$\Cen(\beta') = (\dots \{i\} \ \{k\} \ \dots \  \{n+1\dots\}).$$

There are exactly $N(\beta,\  k)$ elementary transpositions that turn   $\Cen(\beta)$ to $\Cen(\beta')$. So, by Lemma \ref{lemmaChangeOr} the orientation  associated with $\Cen(\beta')$ in $\beta$ is  positive iff $N(\beta,\  k)$ is even.
Observe also that orientation  at vertex $\Cen(\beta')$ for the cells $\beta$ and $\beta'$ are opposite.

(1.b)
Take the vertex $\Cen(\beta')= ( \{k\}\{i\} \ \dots \  \{n+1\dots \})$. It belongs to the cell $\beta$.  $\Cen(\beta')$ differs from $PR(\beta)$ by $M(\beta,\  k)$ elementary transpositions. Therefore, by Lemma \ref{lemmaChangeOr},  the orientation, associated with $PR(\beta')$ in $\beta$ is positive iff $M(\beta,\  k)$ is even.

  Now consider the orientation of $\beta'$. If we denote the $\beta$-neighbors of $\Cen(\beta')$  by $$ A_1,A_2,\dots , A_{\dim(\beta)},$$ then its $\beta'$-neighbors are $$A', A_1, A_2 \dots, A_{\dim(\beta)-1} ,$$ where $A'\in \beta'$. It is easy to see that the orientation, associated with $A$ in $\beta'$ and $\beta$ agree iff $\dim(\beta)$ is even.

(2.a) $\Cen(\beta')= (\spadesuit \{i\}\ \{k\} \ \{j\} \ \dots ) \in \beta$.  $\Cen(\beta')$ differs from $PR(\beta)$ by one elementary transposition. If we denote the $\beta$-neighbors of $\Cen(\beta')$  by $$ A_1,A_2,\dots , A_{\dim(\beta)},$$ then its $\beta'$-neighbors are $$A', A_2 \dots, A_{\dim(\beta)-1},$$ where $A'\in \beta'$.

All other cases are treated analogously.
\end{proof}

 The following lemma is proved in the similar way as Lemma \ref{lemmaFirstStep}.

\begin{lemma} \label{lemmaElemStep}
(Intermediate and last steps) Assume that two critical cells $\beta$ and $\alpha$ connected by two paths.
At  all the steps of gradient (except for the first steps) paths canonical orientations always agree.
	
\end{lemma}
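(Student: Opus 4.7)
The plan is to mirror the strategy of Lemma~\ref{lemmaFirstStep}: at each intermediate or last step, invoke Lemma~\ref{lemmaChangeOr} to compare canonical orientations. For an intermediate edge joining $\beta_i^{p+1}$ to $\beta_{i+1}^{p+1}$ through a common facet $\alpha_{i+1}^p$, I would pick a convenient common vertex $V$ (the natural choice is $V=\Cen(\beta_{i+1})$), write out the two permutations $\sigma_{V,\beta_i}$ and $\sigma_{V,\beta_{i+1}}$ by reading off the adjacent-transposition encoding of $\alpha$-neighbors, and directly extract the sign comparison of the two orientations induced on $\alpha_{i+1}$. The same setup (with $V=\Cen(\alpha_{k+1})$) governs the last step, in which $\alpha_{k+1}$ is a critical facet of $\beta_k$.

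Before performing the sign arithmetic I would enumerate which intermediate step types actually appear. Inspecting the explicit paths listed in the proof of Lemma~\ref{lemmaCriPaths}, at every intermediate step the minimal movable entry $k$ lives strictly away from the $(n+1)$-set: either (a) a singleton $\{k\}$ merges with its immediate successor $I$ (with $k<I$), the intermediate facet being obtained by splitting the minimum out of $I$; or (b) the backward-type analogue. The crucial structural fact is that neither the $(n+1)$-set itself nor any entry to its right participates in the move, so the counts $N(\beta,k)$ and $M(\beta,k)$ that drove the subtle parity checks in Lemma~\ref{lemmaFirstStep} do not appear here.

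The bulk of the work is then a uniform sign computation. In every intermediate case, the principal vertices $\Cen(\beta_i)$ and $\Cen(\beta_{i+1})$ differ by a \emph{single} adjacent transposition, supported on the two positions affected by the move. That same transposition is what governs the reordering of the $\alpha_{i+1}$-neighbor list of $V$ in $\beta_i$ versus $\beta_{i+1}$. By Lemma~\ref{lemmaChangeOr} the two sign contributions cancel, leaving the two orientations induced on $\alpha_{i+1}$ in agreement. The last step is handled identically: $\alpha_{k+1}$ is obtained from $\beta_k$ by the analogous interior refinement, so the same cancellation applies.

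The main obstacle is completeness of the case enumeration: each of the three main scenarios in Lemma~\ref{lemmaCriPaths} produces its own family of intermediate move types, and each branches further according to the relative order of the indices $i,j,k$ and the position of the moved entry inside the label. Once the full list is organised, each individual verification reduces to the two-line permutation-sign check sketched above, and no new combinatorial phenomenon enters beyond those already handled in Lemma~\ref{lemmaFirstStep}.
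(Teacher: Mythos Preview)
Your proposal is correct and takes essentially the same approach as the paper: the paper's own proof of this lemma consists of a single sentence, ``proved in the similar way as Lemma~\ref{lemmaFirstStep}'', and your plan is precisely to replicate that case-by-case orientation comparison using Lemma~\ref{lemmaChangeOr}, restricted to the intermediate and last step types that actually arise in the paths of Lemma~\ref{lemmaCriPaths}. Your additional structural observation---that at every non-first step the move is an ``interior'' one not touching the $(n+1)$-set, so the $N(\beta,k)$ and $M(\beta,k)$ parities never enter---is exactly the reason these steps are simpler than the first steps and always yield agreement.
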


\bigskip

Now  the
proof of the Lemma \ref{LemmaDIfferentials} comes from  the two above lemmata.

\end{document}